\newtheorem{theorem}{Theorem}[section]
\newtheorem{lemma}[theorem]{Lemma}
\newtheorem{proposition}[theorem]{Proposition}
\newtheorem{corollary}[theorem]{Corollary}
\theoremstyle{remark}
\newtheorem{example}[theorem]{Example}
\newtheorem{remark}[theorem]{Remark}
\theoremstyle{definition}
\newtheorem{definition}[theorem]{Definition}
\DeclareMathOperator{\core}{core}
\DeclareMathOperator{\Sym}{Sym}  
\DeclareMathOperator{\Alt}{Alt}
\DeclareMathOperator{\Syl}{Syl}
\DeclareMathOperator{\T}{\mathcal{T}}
\DeclareMathOperator{\I}{\mathcal{I}}
\title[Minimal Degrees for Irreducible Coxeter Groups]{Minimal Faithful Permutation Degrees for Irreducible Coxeter Groups and Binary Polyhedral Groups}
\date{January 2013}
\author{Neil Saunders}
\dedicatory{\upshape
School of Mathematics \\
University of Bristol, BS8 1TW\\ United Kingdom\\[.5em]
{\it E-mail address:} \ \texttt{neil.saunders@bristol.ac.uk} \\[1.5em]
Dedicated to the memory of Teddy Saunders}
\begin{document}

\thanks{\noindent{AMS subject classification (2000): 20B35, 20F55 }}

\thanks{Keywords: Faithful Permutation Representations, Coxeter Groups}
\thanks{\noindent{The author thanks the Heilbronn Institute for Mathematical Research for generously supporting this research}}

\maketitle

\begin{abstract}
The minimal faithful degree of a finite group $G$, denoted by $\mu(G)$, is the least non-negative integer $n$ such that $G$ embeds inside
$\Sym(n)$. In this article we calculate the minimal faithful permutation degree for all of the irreducible Coxeter groups. We also exhibit new examples of finite groups that possess a quotient whose minimal degree is strictly greater than that of the group.
\end{abstract}

\section{Introduction}

The minimal faithful permutation degree of a finite group $G$ is the least non-negative integer $n$ such that $G$ embeds in the
symmetric group $\Sym(n)$; this invariant is denoted $\mu(G)$. Since every permutation representation is afforded by a finite collection of subgroups $\{G_1,\ldots,G_l\}$, where $l \geq 1$ and the $G_i$ are the point stabilisers of the orbits of $G$, we may express $\mu(G)$ as the smallest value of $\sum_{i=1}^{l}|G:G_i|$ such that the intersection of the cores of the $G_i$ is trivial; recall $\core(G_i)=\bigcap_{g \in G}G_{i}^{g}$ (see \cite{J71}, \cite{S08}). It is common to call a collection of subgroups which furnishes $\mu(G)$ {\it the minimal representation} (though it may not in general be unique). We remark from the outset that the minimal faithful permutation degree need not be given by a transitive representation; however if the collection comprises just one subgroup $G_1$, then the representation is transitive and so by faithfulness, $G_1$ is a {\it core-free} subgroup of $G$. \vspace{6pt}

The main result of this paper is the determination of $\mu(W)$ for $W$ an irreducible Coxeter Group. Table \ref{table:CoxeterDegree} below exhibits these degrees and for comparison, we include the size of the respective root systems. It is well-known that an irreducible Coxeter group acts faithfully on its root system and faithfully on any orbit therein, thus $\mu(W)$ is always bounded above by the size of an orbit in its root system. It is the case that for the exceptional Coxeter groups, the minimal degree equals the size of an orbit in the root system precisely when the minimal faithful representation is transitive as will become apparent later. \vspace{6pt}

\begin{table}[h] \label{table:CoxeterDegree}
\caption{\textbf{Minimal Degrees for Irreducible Coxeter Groups}} \centering
\begin{tabular} {|c| c | c | }
\hline
$ \bf{W}$ & $\bf{\mu(W)}$  & $ \bf{ |\Phi|}$ \\\hline

  $A_n$ & $n+1$ & $n(n+1)$ \\
  \hline
  $B_n$ & $2n$  & $2n^2$\\
  \hline
  $D_n$ & $ 2n (n\geq 4)$ & $2n(n-1)$\\
  \hline
  $I_m$ & $\mu(C_m)$ & $12$ \\
  \hline
  $E_6$ & $27$ & $72$\\
  \hline
  $E_7$ & $30$ &$126$ \\
  \hline
  $E_8$ & $240$ & $240$ \\
  \hline
  $F_4$ & $24$ & $48$ \\
  \hline
  $H_3$ & $7$ & $30$\\
  \hline
  $H_4$ & $120$ &$120$ \\
  \hline

\end{tabular}
\end{table}

The proofs of the minimal degrees for types $F_4$ and $H_4$ required calculating central products of binary polyhedral groups. These calculations were surprising as they provided new examples of so-called {\it exceptional permutation groups}: these are groups $G$ that possess a quotient $G/N$ such that $\mu(G/N) > \mu(G)$; the quotient $G/N$ and $N$ are both called {\it distinguished}. They are somewhat pathological in the theory of permutation groups as it is in some sense 'harder' to faithfully represent a quotient of a group than is it to represent the group itself. This phenomenon was first elucidated by Neumann in \cite{N86} and again by Easdown and Praeger in \cite{EP88}; the examples provided in those articles comprised $p$-groups only. In Section \ref{section:exceptional}, we provide a general family of exceptional non-$p$-groups inspired by the $F_4$ and $H_4$ calculations and the results of the aforementioned articles.   \vspace{6pt}

The motivation for considering the Coxeter groups was the following inequality: for two finite groups $G$ and $H$ we always have \begin{equation} \mu(G \times H) \leq \mu(G) + \mu(H).
\label{eq:directsum} \end{equation} Johnson \cite{J71} and Wright \cite{W75} first determined conditions when \eqref{eq:directsum} is an equality, however they were unaware of examples when equality was strict. Indeed, in the closing remarks of \cite{W75}, due to the absence of any examples, Wright asked whether (\ref{eq:directsum}) is an equality for all finite groups. An example where \eqref{eq:directsum} is a strict inequality was subsequently provided and was attached as an addendum to that paper. \vspace{6pt}

In \cite{S10}, it was proved than $10$ is the smallest degree of a direct product where \eqref{eq:directsum} is a strict inequality. Here, the group $G$ could be taken to be $G(2,2,5)$ and $H$ to be the centraliser of $G$ in $\Sym(10)$ which is cyclic of order $2$. Moreover in \cite{S07,S08}, the author studied the minimal degrees of the complex reflection groups $G(p,p,q)$, where $p$ and $q$ are primes and constructed an infinite class of examples where strict inequality holds (see Section \ref{section:exceptional} for a full definition of this family of groups). It was always the case that for $G=G(p,p,q)$, where $p$ and $q$ are distinct primes satisfying certain other conditions, we have $$\mu(G)=\mu(G \times C_{\Sym(\mu(G))}(G))$$ where $C_{\Sym(\mu(G))}(G)$, the centraliser of the embedded image, was isomorphic to a cyclic group of order $p$. \vspace{6pt}

Since it is well-known that $G(2,2,n)$ is isomorphic to the Coxeter group $W(D_n)$, the pursuit of more examples of \eqref{eq:directsum} being a strict inequality lead the author to consider the irreducible Coxeter groups whose minimal degrees were not readily available in the literature.  \vspace{6pt}

\subsection{Organisation of the paper} This article is a self-contained account of the minimal degrees for the irreducible Coxeter groups. In Section \ref{section:background}, we provide some relevant theorems and examples which will be called upon repeatedly. Section \ref{section:classicalgroups} deals with the classical Coxeter groups of types $A$, $B$ and $D$. Section \ref{section:realreflections} deals with the exceptional groups of types $F$ and $H$: here we need to calculate the minimal degrees of the binary polyhedral groups and the central products of these groups with themselves, which turn out to provide new examples of exceptional groups. In Section \ref{section:exceptionalcoxeter}, we calculate the minimal degrees of the groups of type $E$ and in Section \ref{section:directproduct} we make some summary remarks about minimal degrees of arbitrary direct products of irreducible Coxeter groups. In Section \ref{section:exceptional}, we discover new families of exceptional permutation groups based on the calculations of Section \ref{section:realreflections}.

\section{Background Results and Examples} \label{section:background} 

We give a series of theorems and examples that we will implicitly, but frequently, use throughout the sequel. First, we give a theorem due to Karpilovsky \cite{K70}, which also serves as an introductory example; the proof can be found in \cite{J71} or \cite{K70}.
\begin{theorem} \label{theorem:abelian}
Let A be a finite abelian group and let $A \cong A_1 \times \ldots \times A_n$ be its direct product decomposition into non-trivial cyclic
groups of prime power order. Then $$\mu(A)= a_1 + \ldots + a_n,$$ where $|A_i|=a_i$ for each $i$.
\end{theorem}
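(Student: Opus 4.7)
The plan is to establish the two inequalities $\mu(A)\le\sum a_i$ and $\mu(A)\ge\sum a_i$ separately.

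For the upper bound, I would simply take, for each $i$, the regular representation of the cyclic group $A_i$, which is faithful of degree $a_i$. Direct-summing these representations embeds $A = A_1\times\cdots\times A_n$ into $Sym(a_1)\times\cdots\times Sym(a_n)\le Sym(a_1+\cdots+a_n)$, so $\mu(A)\le a_1+\cdots+a_n$.

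For the lower bound, I would start by applying Johnson's reduction \cite{J71}: it is enough to consider a minimal faithful representation $\mathscr{R}=\{H_1,\ldots,H_k\}$ consisting of primitive subgroups. Since $A$ is abelian, every subgroup equals its own core, so the faithfulness condition becomes $\bigcap_i H_i=\{1\}$ and $\sum_i[A:H_i]=\sum_i|A/H_i|$. The key structural lemma to prove is that a subgroup $H\le A$ is primitive if and only if $A/H$ is cyclic of prime power order. The ``if'' direction is immediate since such a quotient has a unique minimal subgroup; the ``only if'' direction follows by decomposing $A/H$ as a product of its Sylow subgroups (if it is not a $p$-group) or as a product of two non-trivial cyclic subgroups (if it is a non-cyclic $p$-group), and pulling the factors back to proper supergroups of $H$ with intersection $H$.

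With each $A/H_i$ now cyclic of prime power order, the injection $A\hookrightarrow\prod_i A/H_i$ restricts on each Sylow $p$-subgroup to an embedding of $A(p)$ into the product of those $A/H_i$ that are $p$-groups. Writing $A(p)\cong\mathbb{Z}/p^{e_1}\times\cdots\times\mathbb{Z}/p^{e_s}$ in invariant factor form ($e_1\ge\cdots\ge e_s$) and the relevant cyclic $p$-power quotients as $\mathbb{Z}/p^{f_1},\mathbb{Z}/p^{f_2},\ldots$ with $f_1\ge f_2\ge\cdots$, the main obstacle is to show that such an embedding forces $f_j\ge e_j$ for every $j$. I would verify this by counting elements of order dividing $p^j$ on both sides, or directly from the structure theorem applied to the image. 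Granted this, summing geometrically gives $\sum_j p^{f_j}\ge\sum_j p^{e_j}$, which is exactly the contribution of the $p$-part of $\sum_l a_l$. Summing over all primes $p\mid|A|$ yields $\sum_i[A:H_i]\ge\sum_l a_l$, completing the proof.
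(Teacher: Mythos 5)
The paper does not actually prove Theorem \ref{theorem:abelian}; it defers to \cite{J71} and \cite{S05}. Your architecture is exactly the standard argument found there: regular representations of the cyclic factors for the upper bound, Johnson's reduction to primitive subgroups, the observation that a subgroup $H$ of an abelian group is primitive precisely when $A/H$ is cyclic of prime power order, and then a prime-by-prime comparison of the cyclic quotients with the invariant factors of the Sylow subgroups. All of that is sound.

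The one step whose proposed justification would fail is the domination $f_j\ge e_j$. That inequality is true, but it does \emph{not} follow from counting elements of order dividing $p^j$ on both sides. Concretely, take $e=(2,2)$ and $f=(3,1,1,1)$: for every $j$ the number of elements of order dividing $p^j$ in $\mathbb{Z}/p^2\times\mathbb{Z}/p^2$ (namely $p^2$, then $p^4$) is at most the corresponding number in $\mathbb{Z}/p^3\times(\mathbb{Z}/p)^3$ (namely $p^4$, $p^5$, $p^6,\ldots$), yet $e_2=2>1=f_2$ and indeed no embedding exists. The correct elementary argument compares ranks rather than element counts: if $A(p)\le G=\prod_j\mathbb{Z}/p^{f_j}$, then $p^kA(p)\le p^kG$ for every $k\ge 0$, and since the minimal number of generators of a subgroup of a finite abelian $p$-group is at most that of the group (compare the subgroups of elements killed by $p$), one gets $\#\{i:e_i>k\}\le\#\{j:f_j>k\}$ for all $k$; taking $k=f_j$ for a hypothetical violation $e_j>f_j$ yields a contradiction. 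Substituting this (or any proof of this classical fact) for the element-counting step, the rest of your sketch goes through and gives the theorem.
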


In \cite{W75} Wright proved the following: 

\begin{theorem} \label{theorem:nilpotent}
Let $G$ and $H$ be non-trivial nilpotent groups. Then $\mu(G \times H)=\mu(G) + \mu(H)$.
\end{theorem}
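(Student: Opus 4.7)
The plan is to reduce to the case where $G$ and $H$ are $p$-groups for a common prime via the Sylow decomposition of nilpotent groups, and then analyse minimal faithful representations of $p$-groups through their central, elementary abelian socles. Every finite nilpotent group is the internal direct product of its Sylow subgroups, and when $\gcd(|N_1|,|N_2|)=1$ the equality $\mu(N_1 \times N_2) = \mu(N_1) + \mu(N_2)$ admits a short direct argument: subgroups of $N_1 \times N_2$ are automatically of the form $L_1 \times L_2$ by coprimality, and any ``mixed'' transitive constituent $L_1 \times L_2$ with $L_i \lneq N_i$ in a faithful representation can be replaced by the pair $\{L_1 \times N_2,\; N_1 \times L_2\}$, which has strictly smaller total degree but the same intersection of cores. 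Applying this prime by prime to $G = \prod_p G_p$, $H = \prod_p H_p$ and $G \times H = \prod_p (G_p \times H_p)$ reduces the theorem to the case of two $p$-groups for the same prime $p$.

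For a $p$-group $P$ the socle $\soc(P)$ is central and elementary abelian of some rank $d$. Since every nontrivial $z \in \soc(P)$ is central, $z \in \core(H) \iff z \in H$, so I set $\mu_z(P) := \min\{[P:H] : z \notin H\}$. Building on Johnson's primitive-subgroup reduction \cite{J71}, one shows that a minimal faithful representation of $P$ can be chosen with exactly one transitive constituent per element of a basis $z_1,\dots,z_d$ of $\soc(P)$, each attaining $\mu_{z_i}(P)$; consequently $\mu(P) = \sum_{i=1}^d \mu_{z_i}(P)$. This socle-separation formula is the main obstacle: one must exclude the possibility that a single clever subgroup whose core simultaneously avoids several socle generators yields a smaller total degree than the sum of the individual minima.

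With this formula in hand the product computation is short. For $p$-groups $G$ and $H$ one has $\soc(G \times H) = \soc(G) \times \soc(H)$, with basis $\{(z_i,1)\}\cup\{(1,w_j)\}$ inherited from bases of $\soc(G)$ and $\soc(H)$. I claim $\mu_{(z_i,1)}(G\times H) = \mu_{z_i}(G)$. The upper bound is witnessed by $L \times H$ for $L \leq G$ attaining $\mu_{z_i}(G)$. For the lower bound, let $K \leq G \times H$ with $(z_i,1) \notin K$ and set $K_1 := \{g \in G : (g,1) \in K\}$; then $z_i \notin K_1$, and since $K_1$ is the kernel of $\pi_H|_K\colon K \to H$ one obtains $|K| \leq |K_1|\cdot|H|$, whence $[G\times H : K] \geq [G:K_1] \geq \mu_{z_i}(G)$. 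The symmetric identity for the $w_j$ is identical, and summing over the full basis yields $\mu(G\times H) = \mu(G) + \mu(H)$.
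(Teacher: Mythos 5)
First, note that the paper does not actually prove this theorem: it is quoted from Wright \cite{W75}, so there is no internal proof to compare against and your argument has to stand on its own. Two of your three steps do stand: the reduction to $p$-groups via the Sylow decomposition and the coprime product formula is correct (and standard), and the computation $\mu_{(z_i,1)}(G\times H)=\mu_{z_i}(G)$ via the kernel of $\pi_H|_K$ is also correct. The problem is the middle step, which you yourself flag as ``the main obstacle'' and then do not prove: the socle-separation formula $\mu(P)=\sum_{i=1}^d\mu_{z_i}(P)$. As stated, for an arbitrary basis of $\soc(P)$, it is false. Take $P=C_4\times C_2=\langle a\rangle\times\langle b\rangle$, so $\mu(P)=4+2=6$ by Theorem \ref{theorem:abelian} and $\soc(P)=\{1,a^2,b,a^2b\}$. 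The pair $\{b,\,a^2b\}$ is a basis of $\soc(P)$ over $\mathbb{F}_2$, and both elements lie outside the index-$2$ subgroup $\langle a\rangle$, so $\mu_b(P)=\mu_{a^2b}(P)=2$ and the sum is $4\neq 6$. (The two witnessing subgroups both have core containing $a^2$, so they do not combine into a faithful representation; this is exactly the failure mode.) The sum $\sum_i\mu_{z_i}(P)$ is genuinely basis-dependent, so the formula can at best hold for a well-chosen basis.

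That weaker, true version does not rescue the final paragraph. There you apply the formula to $G\times H$ with the specific product basis $\{(z_i,1)\}\cup\{(1,w_j)\}$, and you would need to know that this particular basis is one of the good ones for $G\times H$ --- equivalently, that some minimal faithful representation of $G\times H$ separates the socle along the product decomposition. That is not a formality; it is essentially the whole content of Wright's theorem. What your argument actually requires is the one-sided inequality $\sum_i\mu_{z_i}(P)\leq\mu(P)$ valid for \emph{every} basis (applied to the product basis, this plus the trivial upper bound $\mu(G\times H)\leq\mu(G)+\mu(H)$ finishes the proof). Proving that inequality amounts to matching the basis vectors injectively to transitive constituents $H_j$ of a minimal representation with $z_i\notin H_{j(i)}$, and the obvious Hall-type argument breaks down when a single constituent meets $\soc(P)$ in a very small subspace (e.g.\ a core-free subgroup handles every $z_i$ at once). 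Handling that case is where Wright's analysis of primitive subgroups of $p$-groups does real work, and it is missing here. So the architecture is reasonable, but the theorem's actual difficulty has been concentrated into an asserted lemma that is false in the generality you use it and unproven in the form you need.
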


Further in \cite{W75}, Wright constructed a class of finite groups $\mathscr{C}$ with the defining property that any $G \in \mathscr{C}$, there exists a
nilpotent subgroup $G_1$ of $G$ such that $\mu(G_1)=\mu(G)$. It is a consequence of Theorem \ref{theorem:nilpotent} that $\mathscr{C}$ is
closed under direct products and so \eqref{eq:directsum} is an equality for any two groups in $\mathscr{C}$. Wright proved that
$\mathscr{C}$ contains all nilpotent, symmetric, alternating and dihedral groups, and  in \cite{S10}, it was proved that if $\mu(G) \leq 6$, then $G \in \mathscr{C}$; however the extent of it is still an open problem. In \cite{EP88}, Easdown and Praeger showed that \eqref{eq:directsum} is an equality whenever $G$ and $H$ are finite simple groups. \vspace{6pt}

\vspace{6pt}

The next two examples involve the generalised quaternion $2$-group. The calculations here will be repeatedly referred to throughout later sections. 

\begin{example} \label{example:quaternions}
Let $Q_{2^n}$ be the generalised quaternion $2$-group of order $2^n$, which may be given the following presentation: $$Q_{2^n}= \langle x,y \, | \, x^{2^{n-1}}=1, \, x^{2^{n-2}}=y^2, \, x^y=x^{-1} \rangle.$$ It has the property that every subgroup is normal and centre, $\langle y^2 \rangle$, is the unique minimal normal subgroup and so any two non-trivial subgroups intersect at the centre. Thus the minimal faithful degree is given by the Cayley representation and so $\mu(Q_{2^n})=2^n.$  Since the central element is the unique involution of $Q_{2^n}$, it is common to write $y^2=-1$ and so $\langle y^2 \rangle=\{\pm 1\}$; we will do this in the next example and in later sections.
\end{example}

\begin{example} \label{example:quatcentral}
Let $m$ and $n$ be positive integers greater than $2$ with $m \geq n$. For the quaternion groups $Q_{2^n}$ and $Q_{2^m}$, consider the subgroup of the direct product $Q_{2^n} \times Q_{2^m}$ generated by the diagonal element of the centre $\langle (-1,-1) \rangle$. The {\it central product} $Q_{2^n} \circ Q_{2^m}$  of $Q_{2^n}$ with $Q_{2^m}$ is the quotient $$Q_{2^n} \circ Q_{2^m}:= Q_{2^n} \times Q_{2^m}/ \langle (-1,-1) \rangle.$$ (In general, the central product of a group by itself is formed by taking the quotient of the direct product of the group with itself by the diagonal subgroup of the centre). \vspace{6pt} 

Since $Q_{2^n}$ naturally embeds in $Q_{2^m}$, we have an action of $Q_{2^n} \times Q_{2^m}$ on $ Q_{2^m}$ defined as follows: 
\begin{eqnarray*}
Q_{2^n} \times Q_{2^m}  &\longrightarrow& \Sym( Q_{2^m}); \\ 
(g,h) &\mapsto& \sigma_{(g,h)}: x \mapsto g^{-1}xh, 
\end{eqnarray*}
for all $x$ in $Q_{2^m}$. The kernel of this action is the diagonal subgroup of the centre $ \langle (-1,-1) \rangle$, and so there is an embedding of $Q_{2^n} \circ Q_{2^m}$ in $\Sym(Q_{2^m})\cong \Sym(2^m)$; therefore $\mu(Q_{2^n} \circ Q_{2^m}) \leq 2^m$. On the other hand, it can readily be seen that $Q_{2^m}$ embeds in $Q_{2^n} \circ Q_{2^m}$ and so, by Example \ref{example:quaternions}, we have $\mu(Q_{2^n} \circ Q_{2^m})=2^m$.

\end{example}

\begin{remark}
In \cite{EP88}, the authors claim that $Q_{2^n} \circ Q_{2^m}$ is a distinguished quotient of  $Q_{2^n} \times Q_{2^m}$, however their calculations, which is what Example \ref{example:quatcentral} is based on, show that this is not the case. 
\end{remark}

\section{ The Classical Groups $W(A_n), W(B_n)$ and $W(D_n)$} \label{section:classicalgroups}

The Coxeter group $W(A_n)$ is the symmetric group $\Sym(n+1)$ and so it has minimal degree $n+1$. The Coxeter group $W(B_n)$ is the full wreath
product $C_2 \wr \Sym(n)$ which acts faithfully as signed permutations on the set $\{\pm1, \pm2, \ldots, \pm n\}$, which shows that $\mu(W(B_n)) \leq
2n$. On the other hand, the base group is an elementary abelian $2$-group of rank $n$ which has minimal degree $2n$ by Theorem \ref{theorem:abelian}. Therefore  $\mu(W(B_n))=2n$. \vspace{6pt}

The calculation for the Coxeter group $W(D_n)$ is a little harder and so we appeal to an argument given in \cite{KL90} to do most of this
calculation. We first need to establish some definitions and preliminary results regarding permutation actions.

\begin{definition} \label{proposition:permutationmodule}
Let $p$ be a prime and $n$ an integer. A {\it permutation module} (depending on $n$ and $p$) for the symmetric group $\Sym(n)$ is the direct sum of $n$ copies of the
cyclic group of order $p$ denoted by $\mathbb{F}_{p}^{n}$ where $\Sym(n)$ acts via permuting coordinates. Define two submodules of
$\mathbb{F}_{p}^{n}$,
\begin{eqnarray*}
U &=& \{(a_1,a_2, \ldots,a_n) \in \mathbb{F}_{p}^{n} \ | \ \sum_{i=1}^{n}a_i=0 \}, \\
V &=& \{(a,a,\ldots,a) \ | \ a \in \mathbb{F}_p \}.
\end{eqnarray*}
\end{definition}

In the above definition, $U$ is a submodule of dimension $(n-1)$ called the {\it deleted permutation module}.
 The next result is well-known and easily verified by a direct calculation.
\begin{proposition} \label{proposition:permmod}
Let $n \geq 3$ an integer. The only proper submodules of $\mathbb{F}_{p}^{n}$ invariant under the action of the alternating group $\Alt(n)$ are $U$ and $V$.
\end{proposition}

The Coxeter group $W(D_n)$ is the split extension of the deleted permutation module $U$, when $p=2$, with the symmetric group $\Sym(n)$. It can
be also realized as the group of even signed permutations of the set $\{\pm 1, \ldots, \pm n \}$ and so is a subgroup of index $2$ in the group
$W(B_n)$.

\begin{theorem} \label{theorem:Dn}
Let $n \geq 4$ be an integer. We have 
$\mu(W(D_n))= 2n.$ 
\end{theorem}

\begin{proof} 

For $n=4$, we use the realisation of $W(D_4)$ as the group of even signed permutations on the set $\{\pm 1, \pm 2, \pm 3, \pm4 \}$ (using
commas in the cycle notation this time), so
$$W(D_4)=\langle (1, \ -2)(2, \ -1), (1, \ 2)(-1, \ -2), (1, \ 3)(-1, \ -3), (1, \ 4)(-1, \ -4) \rangle.$$ Hence by taking products of conjugates of
generators, let
\begin{eqnarray*}
x &=& (1, \ -2)(2, \ -1)(3, \ -4)(4, \ -3)(1, \ -1)(3, \ -3) \\
  &=& (1, \ -2, \ -1, \ 2)(3, \ -4, \ -3, \ 4),
\end{eqnarray*} and
\begin{eqnarray*}
y &=& (1, \ -3)(3, \ -1)(1, \ -1)(2, \ -2)(2, \ -4)(4, \ -2) \\
  &=& (1, \ -3, \ -1, \ 3)(2, \ 4, \ -2, \ -4).
\end{eqnarray*}
Both lie in $W(D_4)$. But
$$x^2=y^2=(1, \ -1)(2, \ -2)(3, \ -3)(4, \ -4)$$ and $$x^y=(-3, \ -4, \ 3, \ 4)(1, \ 2, \ -1, \ -2)= x^{-1},$$ so that $Q_8$ is isomorphic to a proper subgroup of $W(D_4)$. Hence
$$8=\mu(Q_8)\leq \mu(W(D_4)) \leq \mu(W(B_4))=8,$$ so that $\mu(W(D_4))=8$. \vspace{6pt}

For $n$ greater than or equal to $5$, the proof that $\mu(W(D_n))=2n$ is a special case of \cite[Proposition 5.2.8]{KL90}, where they calculate
the minimal permutation degree of $U \rtimes \Alt(n)$ for $n\geq 5$, relying on the simplicity of $\Alt(n)$. We give an explicit proof here for
convenience. \vspace{6pt}

Define a proper subgroup of $W(D_n)$, $H:= U \rtimes \Alt(n)$. We will show that $\mu(H) \geq 2n$ for $n \geq 5$. Let $H$ act faithfully (but not necessarily transitively) on a set $X$: we will show that $X$ has size at least $2n$. Since $U$ is normal in $H$, the $U$-orbits form an $H$-invariant partition of $X$. Consider the induced action of $H$ on this partition and let $K$ be the kernel of this action; by the simplicity of $\Alt(n)$, either $K=\{1\}$ or $K=\Alt(n)$. \vspace{6pt}

Suppose $K=\{1\}$. Let $Y$ be any non-trivial $U$-orbit and let $\{Y_1,\ldots Y_r\}$ be the orbit of $Y$ under the action of $\Alt(n)$. It follows that $r \geq n$ since $\mu(\Alt(n))=n$ and hence $|X| \geq 2r \geq 2n,$ since $Y$ has size at least $2$ being an non-trivial $U$-orbit, and we are done. \vspace{6pt}

Henceforth we may suppose that $K=\Alt(n)$. Again, let $Y$ be any non-trivial $U$-orbit; here we have $Y^{\Alt(n)}=Y$. Choose $y \in Y$ and let $U_y$ be its point stabiliser in $U$. Since $U$ is abelian, $U_y$ is the kernel (or the set-wise stabiliser) of the $U$-action on $Y$, and moreover, it follows that $U_y$ is normal in $H$. Also observe that since $|Y| \geq 2$, $U_y$ is
a properly contained in $U$ and is stable under the conjugation action of $\Alt(n)$. By Proposition \ref{proposition:permmod}, $U_y$ is trivial or equals $V$, so
that $|U_y|=1$ or $2$. If $|U_y|=1$, then $$|X| \geq |Y|= |U|=2^{n-1} \geq 2n,$$ and we are done. If $|U_y|=2$, then by faithfulness, there must be at least one other non-trivial $U$-orbit (since the kernel of the $U$-action on $Y$ is $U_y$), so $$|X| \geq |Y| + 2=|U:U_y| + 2= 2^{n-2}+2 \geq 2n,$$ and again we are done. This completes the proof.
\end{proof}

\begin{remark} 
When $n=2$ or $3$, $W(D_n)$ is not considered irreducible or is isomorphic to a Coxeter group of type $A$, so we omitted them above. However these groups can be abstractly defined and when $n=2$, this group is the Klein four group and so $\mu(W(D_2))=4$, and for $n=3$, we have $W(D_3) \cong (C_2 \times C_2) \rtimes \Sym(3)$ which
is well-understood to be isomorphic to the symmetric group $\Sym(4)$ via the representation $$\langle (1 \ 2)(3 \ 4), (1\ 4)(2 \ 3), (1 \ 2 \ 3), (1 \ 3) \rangle,$$ so $\mu(W(D_3))=4$. \vspace{6pt}

\end{remark}

\section{The Real Reflection Subgroups: $W(H_3), W(H_4)$ and $W(F_4)$} \label{section:realreflections}

The Coxeter groups $W(H_3), W(H_4)$ and $W(F_4)$ can be realised as reflection groups of real $4$-dimensional space, that is, finite subgroups of
$O_{4}(\mathbb{R})$. The reflection subgroups of real three and four dimensional space have been studied extensively and we refer to \cite{TL07} for a comprehensive treatment.
\vspace{6pt}

We first deal with the Coxeter group $W(H_3)$, making use of the following result.

\begin{theorem}\cite[Theorem 3.1]{EP88} \label{theorem:simplegroups}
Let $S_1 \times \ldots \times S_r$ be a direct product of simple groups. Then $$\mu(S_1 \times \ldots \times S_r)=\mu(S_1) + \ldots +
\mu(S_r).$$
\end{theorem}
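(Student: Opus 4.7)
The upper bound $\mu(S_1 \times \cdots \times S_r) \leq \sum_i \mu(S_i)$ is standard: pulling back a minimal faithful representation of each $S_i$ to $G = S_1 \times \cdots \times S_r$ via the canonical projection $G \twoheadrightarrow S_i$, and combining these in disjoint union, yields a faithful representation of $G$ of degree $\sum_i \mu(S_i)$.

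For the reverse inequality, let $\mathscr{R} = \{H_1, \ldots, H_n\}$ be a minimal faithful representation of $G$. The key structural observation, exploiting the simplicity of each factor, is that for every $i$ and $j$, the subgroup $S_i \cap \core(H_j)$ is a normal subgroup of $G$ contained in the simple group $S_i$, and is therefore either trivial or all of $S_i$. Define $J_j = \{i : S_i \not\leq \core(H_j)\}$. For each $i \in J_j$, the kernel of the action of $S_i$ on $G/H_j$ is $S_i \cap \core(H_j) = \{1\}$, so $S_i$ acts faithfully on $G/H_j$; and since $\bigcap_j \core(H_j) = \{1\}$, every index $i$ lies in at least one $J_j$. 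Consequently, for each $i$ the subcollection $\mathscr{R}_i = \{H_j : i \in J_j\}$ affords a faithful representation of $S_i$, giving
\[
\sum_{j\,:\,i \in J_j} |G:H_j| \;\geq\; \mu(S_i).
\]

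Summing over $i$ would complete the proof, except that each $H_j$ is counted $|J_j|$ times on the left; hence I aim to ensure that the sets $\mathscr{R}_i$ partition $\mathscr{R}$, i.e.\ that $|J_j| = 1$ for every $j$. I plan to establish this by induction on $r$. If $|J_j| \geq 2$, then $H_j/\core(H_j)$ is core-free in $\prod_{i \in J_j} S_i$, and I replace $H_j$ by the pull-backs of minimal faithful representations of the individual $S_i$ for $i \in J_j$, thereby preserving faithfulness. The inductive hypothesis applied to $\prod_{i \in J_j} S_i$ (valid when $|J_j| < r$) ensures that this replacement does not increase the total degree, via
\[
|G:H_j| \;=\; \bigl|\textstyle\prod_{i \in J_j} S_i : H_j/\core(H_j)\bigr| \;\geq\; \mu\bigl(\textstyle\prod_{i \in J_j} S_i\bigr) \;=\; \sum_{i \in J_j} \mu(S_i).
\]
Iterating, we may assume $|J_j| = 1$ throughout, and then
\[
\mu(G) \;=\; \sum_j |G:H_j| \;=\; \sum_i \sum_{j \in \mathscr{R}_i} |G:H_j| \;\geq\; \sum_i \mu(S_i).
\]

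The main obstacle is the edge case $|J_j| = r$, in which $H_j$ is itself core-free in $G$ and the inductive hypothesis does not apply (note that in this case $\mathscr{R}$ reduces, by minimality, to the single transitive constituent $\{H_j\}$). I plan to handle this by a Goursat-type analysis of the projections $\pi_i(H_j) \leq S_i$: if some projection is proper, then $|G:H_j| \geq \prod_i |S_i : \pi_i(H_j)| \geq \prod_i \mu(S_i) \geq \sum_i \mu(S_i)$; if every projection is surjective, iterated Goursat combined with the simplicity of each $S_i$ forces $H_j$ to be a subdirect product identified via isomorphisms of factors, whence $|G:H_j| \geq |S_{i_0}|$ for some factor $S_{i_0}$, exceeding $\sum_i \mu(S_i)$ by the elementary bound $|S| \geq 2\mu(S)$ for non-abelian simple $S$, together with Theorem~\ref{theorem:abelian} for any abelian factors present.
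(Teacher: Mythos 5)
First, a point of comparison: the paper does not prove this theorem at all --- it is quoted from Easdown and Praeger \cite[Theorem 3.1]{EP88} --- so your argument can only be measured against that source and against correctness. Your upper bound is fine, and your central observation that $S_i \cap \core(H_j)$ is a normal subgroup of $G$ inside the simple group $S_i$, hence trivial or all of $S_i$, is correct. For a product of \emph{nonabelian} simple groups your scheme can be pushed through, because there every normal subgroup of $G$ --- in particular every $\core(H_j)$ --- is a subproduct, so $\core(H_j)=\prod_{i\notin J_j}S_i$, $G/\core(H_j)\cong\prod_{i\in J_j}S_i$, and $|J_j|=r$ really does force $H_j$ core-free, exactly as your replacement and Goursat steps require.

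The genuine gap is the abelian simple factors, which the theorem includes and which the paper actually uses ($W(H_3)\cong C_2\times Alt(5)$ and $W(E_7)\cong O_7(\mathbb{F}_2)\times C_2$). When isomorphic abelian factors repeat, normal subgroups of $G$ need not be subproducts, so $\core(H_j)$ can be a diagonal: take $G=C_2\times C_2$ and $H=\langle(1,1)\rangle$. Then $\core(H)=H\neq\{1\}$ even though $J=\{1,2\}$ has size $r$ (so your assertion that $|J_j|=r$ makes $H_j$ core-free is false), $G/\core(H)\cong C_2\not\cong S_1\times S_2$, and $|G:H|=2<\mu(S_1)+\mu(S_2)=4$, which refutes the key inequality $|G:H_j|\geq\mu(\prod_{i\in J_j}S_i)$ underlying your replacement step. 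Concretely, $\{\langle(1,1)\rangle,\langle(1,0)\rangle\}$ is a minimal faithful representation of $C_2\times C_2$ of degree $4$, and your replacement turns it into one of degree $6$, so the chain $\mu(G)=\deg\mathscr{R}\geq\deg\mathscr{R}'\geq\sum_i\mu(S_i)$ breaks at the first inequality; adjoining a factor $Alt(5)$ produces the same failure in the branch $2\leq|J_j|<r$. (Separately, in your Goursat case the bound $\prod_i|S_i:\pi_i(H_j)|\geq\prod_i\mu(S_i)$ needs \emph{every} projection to be proper, not just one.) A repair must treat the abelian part of the socle by a different argument --- essentially the content of Theorem \ref{theorem:abelian} --- rather than by the uniform induction you propose; your proof is sound only under the additional hypothesis that all $S_i$ are nonabelian, or that no two abelian factors are isomorphic.
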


The Coxeter group $W(H_3)$ is isomorphic to the direct product $C_2 \times \Alt(5)$ (see \cite{TL07}) which are clearly simple groups. Moreover,
it is easy to see that the minimal degree of $\Alt(5)$ is $5$ and so by Theorem \ref{theorem:simplegroups}, we have:

\begin{proposition} 
The minimal degree of  $\mu(W(H_3))$ is 7.

\end{proposition}

For the groups $W(H_4)$ and $W(F_4)$, we need some results relating to the finite subgroups of the quaternions. Specifically we will need to calculate the minimal degrees of the central products of the binary tetrahedral group with itself and the binary icosahedral group with itself. This will require a rather lengthy digression into the minimal degrees of the binary polyhedral groups, however new results about distinguished quotients arise which will be further examined in Section \ref{section:exceptional}.

\subsection{The Quaternions and the Binary Polyhedral Groups} \label{subsection:polyhedral}
The quaternions are defined as 
 $$\mathbb{H}=\big\{a+bi+cj+dk \ | \ a,b,c,d \in \mathbb{R}, \ i^2=j^2=k^2=ijk=-1 \big\}.$$ This may be regarded as a  $4$-dimensional real vector space as well as an algebra equipped with a norm function. For every quaternion $h=a+bi+cj+dk$, the norm of $h$ is $a^2+b^2+c^2+d^2$. The
set of quaternions of norm $1$ form a subgroup of the multiplicative group $\mathbb{H}^*$ called the {\it unit quaternions}, denoted by $S^3$. It is well-known that all finite subgroups of $\mathbb{H}^*$ are contained in $S^3$. There is a surjection of the semidirect product $(S^3 \times S^3) \rtimes C_2$ onto the orthogonal group $O_{4}(\mathbb{R})$ (where $C_2$ acts
by interchanging components). The kernel of this homomorphism is the diagonal subgroup of the centre of $S^3 \times S^3$ and so we have an
isomorphism $(S^3 \circ S^3) \rtimes C_2 \cong O_{4}(\mathbb{R})$ where $\circ$ denotes central product. Thus the finite reflection subgroups of
$O_{4}(\mathbb{R})$ are tightly controlled by the finite subgroups of $S^3$, of which, there only five classes. Here is a classification of the finite subgroups of $S^3$: since we will not deal with the first two on the list, we refer the reader to \cite[Theorem 5.14]{TL07} for their definition and for a proof of the following proposition. 

\begin{proposition}
Every finite subgroup of $S^3$  is conjugate in $S^3$ to one of the following groups:
\begin{enumerate}
\item the cyclic group $\mathcal{C}_m$ of order $m$, 
\item the binary dihedral group $\mathcal{D}_m$ of order $4m$, 
\item the binary tetrahedral group $\mathcal{T}$ of order $24$, 
\item the binary octahedral group $\mathcal{O}$ of order $48$,
\item the binary icosahedral group $\mathcal{I}$ of order $120$.
\end{enumerate}
\end{proposition}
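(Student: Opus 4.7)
My plan is to reduce to finite subgroups of $S^{3}$, push them down through the double cover $S^{3} \to SO_{3}(\mathbb{R})$, invoke the Klein classification of finite rotation groups in three-space, and then lift back carefully using the fact that $-1$ is the unique involution of $S^{3}$.

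The first reduction is immediate: if $h \in \mathbb{H}^{*}$ has finite order $m$, then multiplicativity of the norm gives $\N(h)^{m} = \N(h^{m}) = 1$, and since $\N(h)$ is a positive real this forces $\N(h) = 1$. Hence every finite subgroup of $\mathbb{H}^{*}$ already lies in $S^{3}$, and it suffices to classify finite $G \leq S^{3}$ up to conjugacy in $S^{3}$.

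Next I would set up the standard double cover $\pi \colon S^{3} \twoheadrightarrow SO_{3}(\mathbb{R})$ sending $q$ to the rotation $x \mapsto qxq^{-1}$ on the three-dimensional space of pure quaternions. Its kernel is the centre $\{\pm 1\}$. A short direct calculation, writing $q = a + bi + cj + dk$ with $\N(q) = 1$ and imposing $q^{2} = 1$, shows that $-1$ is the only element of order $2$ in $S^{3}$. I would then quote (or sketch via the classical poles-counting argument on $S^{2}$) the Klein classification: every finite subgroup of $SO_{3}(\mathbb{R})$ is $SO_{3}$-conjugate to one of $C_{n}, D_{n}, A_{4}, S_{4}, A_{5}$.

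Given a finite $G \leq S^{3}$ with image $\bar G = \pi(G)$, I would split into two cases. If $-1 \in G$ then $G = \pi^{-1}(\bar G)$, of order $2|\bar G|$, and identifying this preimage in each of the five cases for $\bar G$ recovers exactly $\mathcal{C}_{2n}$, $\mathcal{D}_{m}$, $\mathcal{T}$, $\mathcal{O}$ and $\mathcal{I}$. If instead $-1 \notin G$ then $\pi|_{G}$ is an isomorphism onto $\bar G$; by uniqueness of the involution, $G$ has no element of order $2$, which rules out $D_{n}$ and each of $A_{4}, S_{4}, A_{5}$, and leaves only $\bar G$ cyclic of odd order, giving $G \cong \mathcal{C}_{m}$ with $m$ odd. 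The $S^{3}$-conjugacy statement follows because any conjugator in $SO_{3}(\mathbb{R})$ lifts through $\pi$ to a conjugator in $S^{3}$. The main obstacle is importing the Klein classification itself, which is classical but case-heavy; given the reference \cite{TL07} I would quote it and concentrate the proof on the quaternionic lifting step.
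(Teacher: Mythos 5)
Your argument is correct, and it is the standard proof of this classification; the paper itself offers no proof at all, simply citing \cite[Theorem 5.14]{TL07}, and the route you take (reduce to $S^3$ via multiplicativity of the norm, pass through the double cover $S^3\to SO_3(\mathbb{R})$ with kernel $\{\pm1\}$, quote Klein's list $C_n,D_n,A_4,S_4,A_5$, and lift back using the fact that $-1$ is the unique involution of $S^3$) is precisely the one used in that reference. All the key steps check out: the norm argument forcing finite subgroups into $S^3$, the computation showing $q^2=1$ forces $q=\pm1$, and the dichotomy according to whether $-1\in G$ (giving the full preimages $\mathcal{C}_{2n},\mathcal{D}_m,\mathcal{T},\mathcal{O},\mathcal{I}$) or $-1\notin G$ (where the absence of involutions kills the dihedral and polyhedral cases and leaves only odd-order cyclic groups). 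The only point worth one extra line is the conjugacy claim in the second case: lifting a conjugator $R$ with $R\bar G_1R^{-1}=\bar G_2$ to $q\in S^3$ gives $qG_1q^{-1}$ inside $\pi^{-1}(\bar G_2)$, which is cyclic of order $2m$ and therefore has a \emph{unique} subgroup of order $m$; that uniqueness is what identifies $qG_1q^{-1}$ with $G_2$ rather than merely with another subgroup having the same image. With that remark, and granting the Klein classification you explicitly import, the proof is complete.
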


We also make the following easy, but useful observation: 

\begin{lemma} \label{lemma:unique2}
Let $G$ be any subgroup of $S^3$ of even order. Then $\langle -1 \rangle \cong C_2$ is the unique subgroup of order $2$ in $G$.
\end{lemma}

We now define and further explore the structure of the groups $\T$ and $\I$ further, with a view to calculate the minimal degrees of the central products of these groups with themselves. For what follows, let $Q_8$ denote the multiplicative group $\{ \pm 1, \pm i, \pm j, \pm k \}$, the quaternion group of order $8$ and let 
$\omega=\tfrac12(-1+i+j+k)$. Some easy calculations verify that $\omega$ has order $3$ and normalises $Q_8$; indeed one sees that $\omega$ permutes $i,j$ and $k$ in a $3$-cycle. The binary tetrahedral
group is defined to be $\mathcal{T}:= Q_8\langle \omega \rangle \cong Q_8 \rtimes C_3.$ We observe that $\mathcal{T}$ has order $24$ and can be generated by $i$ and
$\omega$. Moreover some elementary Sylow theory verifies that the normaliser of $\langle \omega \rangle$ in $\T$ is $\langle \omega \rangle \times \{\pm1\}$; this fact will be needed later.  \vspace{6pt}

Let $\tau:=\frac{1}{2}(1+\sqrt{5})$, the golden ratio, and let $\sigma:=\frac{1}{2}(\tau^{-1}+i+\tau j)$. A direct calculation shows that $\sigma$ has order $5$ and a few more calculations of a similar nature show that $$\tau - \tau^{-1}=1, \ \tau^{2}=\tau +1 \quad \text{and} \quad \tau^{-2}=-\tau^{-1}+1.$$ The
 binary icosahedral group is defined to be $\I := \mathcal{T}\langle \sigma \rangle$. Note that it can also be generated
by $\sigma$ and $i$ (since $\omega=i^{-1}(\sigma^3 i \sigma i)^2i$) and has order $120$. It may be given by the presentation $$\mathcal{I} \cong \langle s, t \ | \ (st)^2=s^3=t^5 \rangle,$$ by
identifying $s$ with $\frac{1}{2}(1+i+j+k) \ (=-\omega^2)$ and $t$ with $\frac{1}{2}(\tau + \tau^{-1}i+j) \, (=-\sigma^3)$ (see \cite[page
439]{S83}). For convenience in what follows, we state and prove that the alternating group $\Alt(5)$ is a quotient of the binary icosahedral group by its centre, which has implications its subgroup structure.

\begin{lemma} \label{lemma:icosahedral}
$\mathcal{I}/\{\pm 1\} \cong \Alt(5).$
\end{lemma}

\begin{proof}
A simple calculation verifies that $$\left(\tfrac12(1+i+j+k) \cdot \tfrac12(\tau + \tau^{-1}i+j)\right)^2=-1,$$ so adding the relation $(st)^2=1$ to the
presentation for $\mathcal{I}$, and using Tietze transformations, we obtain a well-known presentation for $\Alt(5)$ (see \cite{M68}):
\begin{eqnarray*}
\mathcal{I}/\{\pm 1\} &\cong& \langle s, t \ | \ (st)^2=s^3=t^5=1 \rangle \\
 &\cong& \langle s, t, a, b \ | \ (st)^2=s^3=t^5=1, a=st, b=s \rangle \\
 &\cong& \langle a,b \ | \ a^2=b^3=(ab)^5=1 \rangle \\
 &\cong& \Alt(5),
 \end{eqnarray*}
 which completes the proof.
\end{proof}

\begin{corollary} \label{corollary:oddalt}
The only subgroups of $\mathcal{I}$ of odd order are cyclic of orders $1, 3$ or $5$.
\end{corollary}

\begin{proof}
Let $H$ be a subgroup of $\mathcal{I}$ of odd order. Since $|\mathcal{I}|=2^3 \cdot 3 \cdot 5$, $|H|$ divides $15$. If $|H|=15$, then $H$ is
cyclic (well-known fact that easily follows from Sylow theory), so $$H \cong \pm H/\{\pm 1\} \hookrightarrow \Alt(5),$$ so that $$8=\mu(C_3
\times C_5)=\mu(H) \leq \mu(\Alt(5))=5,$$ a contradiction. Hence $|H|$ has order $1, 3$, or $5$.
\end{proof}

We will show that taking direct products of the binary tetrahedral, octahedral and icosahedral groups with themselves are exceptional groups, with distinguished quotients being central products. We first calculate the minimal degrees of these groups, the explicit permutation representations were calculated in Magma.

\begin{proposition}
We have
\begin{enumerate}
\item $\mu(\mathcal{T})=8$, with an explicit representation on right cosets of $\langle \omega \rangle$:
\begin{eqnarray*}
\mathcal{T} &\hookrightarrow& \Sym(8); \\
    i       &\mapsto& (1 \ 2 \ 4 \ 7)(3 \ 6 \ 8 \ 5),\\
    j       &\mapsto& (1 \ 3 \ 4 \ 8)(5 \ 7 \ 6 \ 2),\\
    \omega       &\mapsto& (2 \ 3 \ 5)(6 \ 7 \ 8).
\end{eqnarray*}

\item $\mu(\mathcal{I})=24$, with an explicit representation with an explicit action on the right cosets of $\langle t \rangle$:
\begin{eqnarray*}
\mathcal{I} &\hookrightarrow& \Sym(24); \\
     s      &\mapsto& (1 \ 2 \ 5 \ 3 \ 7 \ 4)(6 \  10 \  9 \  12 \  14 \  8)(11 \  17 \  15 \  13 \ 16 \ 18)(19 \  20 \  24 \  22 \ 21 \  23), \\
     t      &\mapsto& (1 \  3)(2 \  4 \  8 \  13 \  12 \  7 \  5 \  9 \  11 \  6)(10 \ 15 \  19 \  21 \  17 \ 14 \  18 \ 22 \  20 \  16)(23 \  24).
\end{eqnarray*}

\end{enumerate}
\end{proposition}

\begin{proof}
We start with the binary tetrahedral group $\mathcal{T}=Q_8 \rtimes \langle \omega \rangle$. By Example \ref{example:quaternions}, $\mu(Q_8)=8$ which is thus a lower bound for $\mu(\mathcal{T})$. The following calculation $$i^{-1}\omega i=\frac{1}{2}(1+i-j-k),$$ shows that $\langle \omega \rangle$ is not a
normal subgroup and so it must be a core-free subgroup of index $8$. Therefore $\mu(\mathcal{T})=8$. \vspace{6pt}

For the binary icosahedral group $\mathcal{I}$, we claim that $\langle \sigma \rangle$ forms a core-free subgroup of index $24$.
Indeed, since $\mathcal{I}$ is generated by $\sigma$ and $i$, we explicitly list the elements of the following groups:

\begin{eqnarray*}
\langle \sigma \rangle &=& \big\{1,\tfrac{1}{2}(\tau^{-1}+i+\tau j), \tfrac{1}{2}(-\tau+\tau^{-1}i+j), \tfrac{1}{2}(-\tau-\tau^{-1}i- j), \tfrac{1}{2}(\tau^{-1}-i-\tau j) \big \}, \\
\langle i^{-1} \sigma i \rangle &=& \big\{1,\tfrac{1}{2}(\tau^{-1}+i-\tau j), \tfrac{1}{2}(-\tau+\tau^{-1}i-j), \tfrac{1}{2}(-\tau-\tau^{-1}i + j), \tfrac{1}{2}(\tau^{-1}-i+\tau j)\big \}.
\end{eqnarray*}
 Therefore $\langle \sigma \rangle \cap \langle \sigma^i \rangle =\{1\}$ which implies $\core \langle \sigma \rangle =\{1\}$ and so
$\mu(\mathcal{I}) \leq 24$. \vspace{6pt}

On the other hand by Lemma \ref{lemma:icosahedral}, we know that $\mathcal{I}/\{\pm 1 \} \cong \Alt(5)$. Let $\varphi$ be the map from $\mathcal{I}$ to
$\Alt(5)$ with kernel $\{\pm 1\}$. Suppose there is a normal subgroup $N$ of $\mathcal{I}$ such that $-1$ is not in $N$. Then the restriction
map $\varphi': N \mapsto \varphi(N)$ has trivial kernel and so $N \cong \varphi(N)$ which is necessarily a normal subgroup of $\Alt(5)$.  By the
simplicity of $\Alt(5)$, we are forced to have $\varphi(N) = \Alt(5)$ and so $N$ has even order. However by Lemma \ref{lemma:unique2}, every
subgroup of even order contains $-1$ and so any non-trivial normal subgroup of $\mathcal{I}$ contains $-1$. This forces every minimal
representation of $\mathcal{I}$ to be transitive, that is, $\mu(\I)=|\I:H|$ for some core-free subgroup $H$. \vspace{6pt}

By the above argument, $|H|$ must be odd. Since $|\mathcal{I}|= 2^3 \cdot 3 \cdot 5$, and noting that $\mu(\mathcal{I}) \leq 24$, $H$ must
have order $5$ or $15$. Suppose for a contradiction that $|H|=15$. Then $H$ is cyclic of order $15$, so $\varphi(H) \cong H$ is a cyclic subgroup
of order $15$ inside $\Alt(5)$, which contradicts Corollary \ref{corollary:oddalt}.\vspace{6pt}

Therefore the largest core-free subgroup of $\mathcal{I}$ has order $5$, proving that $\mu(\mathcal{I})=24$.
\end{proof}

 We now calculate the minimal degrees of the direct products $\T \times \T$ and $\I \times \I$, after stating a counting
 lemma about direct products.
 
 \begin{lemma} \label{lemma:counting}
Let $H$ be a subgroup of the direct product $G_1 \times G_2$ and let $\varphi_{i}: H \longrightarrow G_i$ be the projection map for $i=1,2$. Put
$$H^1=\{ y \in G_2 \ | \ (1,y) \in H \} \quad \text{and} \quad H^2= \{x \in G_1 \ | \ (x,1) \in H \}.$$ Then $H^1$ is a subgroup of $1 \times G_2$ and
$H^2$ is a subgroup of $G_1 \times 1$ and $$|H|=|\varphi_{i}(H)||H^i|,$$ for $i=1,2$.
 \end{lemma}

\begin{proof}
This is a simple consequence of the First Isomorphism Theorem.
\end{proof}

\begin{proposition} We have
\begin{enumerate}
\item $\mu(\mathcal{T} \times \mathcal{T})=16$ \quad and  
\item $\mu(\mathcal{I} \times \mathcal{I} )= 48$.
\end{enumerate}
\end{proposition}

\begin{proof}
For the group $\mathcal{T}$, it is readily observed that it is contained in Wright's class $\mathscr{C}$ (defined in the introduction), with $Q_8$ being the nilpotent subgroup. Thus $$\mu(\mathcal{T} \times
\mathcal{T})=\mu(\mathcal{T}) + \mu(\mathcal{T})=16.$$

For the group $\mathcal{I} \times \mathcal{I}$, we first note that  $\mu(\mathcal{I} \times  \mathcal{I}) \leq 2\mu(\mathcal{I})=48$. Suppose
that $\{L_1, \ldots, L_k \}$ is a collection of subgroups of $\mathcal{I} \times \mathcal{I}$ that affords a minimal faithful representation.
Then $L_{1}^1, \ldots, L_{k}^1$ (following the notation of Lemma \ref{lemma:counting}) are subgroups of $\mathcal{I}$, so if they all have even
order, then $\langle (1,-1) \rangle$ is contained in every $L_i,$ contradicting that $\cap_{i=1}^{k}L_i$ has trivial core. \vspace{6pt}

Without loss of generality, we may assume that $L_{1}^{1}$ has odd order, so by Corollary \ref{corollary:oddalt}, $|L_{1}^{1}| \leq 5$. If
$|\varphi_{1}(L_{1})| < |\mathcal{I}|$, then $|\mathcal{I}:\varphi_{1}(L_{1})| \geq 2$, so

$$\mu(\mathcal{I} \times \mathcal{I}) \geq |\mathcal{I} \times \mathcal{I}:L_1| =  \frac{|\mathcal{I} \times \mathcal{I}|}{|\varphi_{1}(L_1)||L_{1}^{1}|} =|\mathcal{I}:\varphi_{1}(L_1)| |\mathcal{I}:L_{1}^{1}| \\
\geq 2 \cdot \frac{120}{5}=48,$$
\noindent and we are done. \vspace{6pt}

Hence we may suppose $\varphi_{1}(L_1)=\mathcal{I}$, so $|L_1|=|\mathcal{I}||L_{1}^{1}|$. In particular, $(-1,y) \in L_1$ for some $y$: if $y$ has even order then $L_1$ immediately contains a central element of order $2$, which we may without loss of generality assume is $(-1,1)$,  and if $y$ has odd
order, then $$(-1,1)=(-1,y)^{|y|} \in L_1.$$ If $k=1$, then $\langle (-1,1) \rangle \subset L_1$, contradicting that $L_1$ has trivial core; hence
$k \geq 2$. If $|L_{i}^{2}|$ is even for all $i \geq 2$, then again $\langle (1,-1) \rangle$ is contained in every $L_i,$ contradicting that $\cap_{i=1}^{k}L_i$ has trivial core. \vspace{6pt}
Without loss of generality, we may assume that $L_{2}^{2}$ has odd order, so as before, $|L_{2}^{2}| \leq 5$. Hence

$$
\mu(\mathcal{I} \times \mathcal{I}) \geq |\mathcal{I} \times \mathcal{I}:L_1| +  |\mathcal{I} \times \mathcal{I}:L_2|
 = \frac{|\mathcal{I}|^2}{|\mathcal{I}||L_{1}^{1}|} + \frac{|\mathcal{I}|}{|\varphi_{2}(L_2)|}\frac{|\mathcal{I}|}{|L_{2}^{2}|} 
\geq \frac{|\mathcal{I}|}{|L_{1}^{1}|} + \frac{|\mathcal{I}|}{|L_{2}^{2}|}
\geq 2\cdot \frac{120}{5} =48, 
$$
and the proposition is proved.
\end{proof}

To calculate the minimal degrees of the central products $\T \circ \T$ and $\I \circ \I$, we will need the following: 

\begin{lemma}\label{lemma:centralproduct}
Let $G$ be a finite group and $p$ is prime. Suppose that the centre $Z(G)$ is cyclic of order $p$ and that it is the unique minimal normal
subgroup of $G$. Then the central product $G \circ G$ has a unique minimal normal subgroup isomorphic to $C_p$, namely $Z(G) \circ Z(G)$.
\end{lemma}

\begin{proof}
Let $\overline{N}$ be a non-trivial normal subgroup of $G \circ G$. Let $N$ be the pre-image of $\overline{N}$ in $G \times G$, a normal subgroup of $G
\times G$ strictly containing the diagonal copy of $Z(G)$. \vspace{6pt}

Let $(x,g)$ be any element of $N$ not contained in the diagonal copy of $Z(G)$. If $x,g \in Z(G)$, then $(x,g)$ and this diagonal copy generate
$Z(G) \times Z(G)$, which is thus contained in $N$. Otherwise, one of $x$ and $g$, say $g$, is not contained in $Z(G)$, so there is some
element $h \in G$ such that $g^{-1}h^{-1}gh \neq 1$. But $N$ contains $=(x,g)^{-1}(x,g)^{(1,h)}=(1, g^{-1}h^{-1}gh)$ as well as its normal
closure. So $N$ contains $\{1\} \times Z(G)$ and therefore contains $Z(G) \times Z(G)$ in this case also. Passing to the quotient, we find $Z(G)
\circ Z(G)$ is contained in $\overline{N}$.
\end{proof}

\begin{lemma} \label{lemma:structure}
$\mathcal{T} \times \mathcal{T} \cong (Q_8 \times Q_8) \rtimes (C_3 \times C_3)$ and so $\mathcal{T} \circ \mathcal{T} \cong (Q_8 \circ Q_8)
\rtimes (C_3 \times C_3),$ where the semidirect products are taken under appropriate actions.
\end{lemma}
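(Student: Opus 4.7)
The plan is to handle the two isomorphisms in sequence, with the second following from the first together with a small observation about where the centre of $\mathcal{T}$ sits.

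For the first isomorphism, I would invoke the general fact that a direct product of semidirect products factors as a semidirect product of direct products: given $A_i \rtimes B_i$ for $i=1,2$, one has
$$(A_1 \rtimes B_1) \times (A_2 \rtimes B_2) \;\cong\; (A_1 \times A_2) \rtimes (B_1 \times B_2),$$
where $B_1 \times B_2$ acts componentwise on $A_1 \times A_2$. Applying this with $A_i = Q_8$ and $B_i = C_3$, and using the isomorphism $\mathcal{T} \cong Q_8 \rtimes C_3$ recorded just before the lemma, one immediately obtains $\mathcal{T} \times \mathcal{T} \cong (Q_8 \times Q_8) \rtimes (C_3 \times C_3)$. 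Concretely, the map sending $((a,b),(c,d))$ to $((a,c),(b,d))$ does the job; verifying it is a homomorphism reduces to the observation that the $C_3$ factor in each copy of $\mathcal{T}$ only acts on its own $Q_8$ and commutes with everything in the other factor.

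For the second isomorphism, recall that $\mathcal{T} \circ \mathcal{T}$ is the quotient of $\mathcal{T} \times \mathcal{T}$ by the diagonal subgroup $\Delta$ of $Z(\mathcal{T}) \times Z(\mathcal{T})$. The key point is that $Z(\mathcal{T}) = Z(Q_8)$, which has order $2$: since the $C_3$ factor acts nontrivially on $Q_8$, any central element of $Q_8 \rtimes C_3$ must project trivially to the $C_3$ quotient, forcing it into the normal factor $Q_8$, and then further into $Z(Q_8)$. Consequently, under the isomorphism constructed in the first paragraph, $\Delta$ corresponds to a subgroup sitting entirely inside $Q_8 \times Q_8$, meeting $C_3 \times C_3$ trivially and central in $(Q_8 \times Q_8) \rtimes (C_3 \times C_3)$. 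Quotienting by $\Delta$ therefore only collapses the normal factor, producing
$$\bigl((Q_8 \times Q_8)/\Delta\bigr) \rtimes (C_3 \times C_3) \;\cong\; (Q_8 \circ Q_8) \rtimes (C_3 \times C_3),$$
as required.

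The main technical point, though mild, is to confirm that the induced $(C_3 \times C_3)$-action on $Q_8 \circ Q_8$ is well defined — equivalently that $\Delta$ is stable under the $(C_3 \times C_3)$-action on $Q_8 \times Q_8$. This is automatic because any group automorphism preserves the centre, so each $C_3$ factor sends $Z(Q_8)$ to itself in its corresponding factor; hence $Z(Q_8) \times Z(Q_8)$ is fixed setwise and its diagonal subgroup $\Delta$ is preserved. No other subtleties arise.
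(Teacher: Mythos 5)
Your proposal is correct and follows essentially the same route as the paper: decompose $\mathcal{T}\times\mathcal{T}$ as $(Q_8\times Q_8)\rtimes(C_3\times C_3)$ via the componentwise action, then observe that $Z(\mathcal{T})=Z(Q_8)\cong C_2$ so that quotienting by the diagonal central subgroup collapses only the normal factor. You simply spell out the details (the explicit rearranging map and the stability of $\Delta$ under the $C_3\times C_3$-action) that the paper leaves implicit.
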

\begin{proof}
The first isomorphism is clear since under the appropriate actions,
\begin{eqnarray*}
\mathcal{T} \times \mathcal{T} &\cong& (Q_8 \rtimes C_3) \times (Q_8 \rtimes C_3) \\
&\cong& (Q_8 \times Q_8) \rtimes (C_3 \times C_3).
\end{eqnarray*}

For the second isomorphism, observe that since $Z(\mathcal{T})=Z(Q_8) \cong C_2$, we deduce that $\mathcal{T} \times \mathcal{T}$ surjects onto $(Q_8
\circ Q_8) \rtimes (C_3 \times C_3)$ with kernel $\{(z,z) \ | \ z \in Z(\mathcal{T}) \}$, the diagonal subgroup of the centre of $\mathcal{T}
\times \mathcal{T}$. Thus the second isomorphism follows.
\end{proof}

Lemma \ref{lemma:centralproduct} implies that the minimal degree of $\T \circ \T$ is obtained via a transitive representation, thus we seek the largest core-free subgroup to calculate its minimal degree.

\begin{remark}
We remark that when we know that the minimal degree is given by a transitive permutation representation, it can be easily calculated using a computer since a maximal core-free subgroup yields the minimal degree. This is the case with Propositions \ref{proposition:tetra} and  \ref{proposition:icosa}, and also in previous calculations. 
\end{remark}

\begin{proposition} \label{proposition:tetra}
We have $\mu(\T \circ \T)=24.$
\end{proposition}

\begin{proof}
Define a map $\phi: \T \times \T \longrightarrow \Sym(\T)$ by sending the pair $(g,h)$ to the permutation $x \mapsto g^{-1}xh$ for all $x$ in $\T$. A quick calculation shows that the kernel of this map is the diagonal subgroup of $Z(\T) \times Z(\T)$ and so $\T \circ \T$ embeds in $\Sym(\T)=\Sym(24)$; thus $\mu(\T \circ \T) \leq 24$. Now, since we can easily find a copy of $\T \times C_3$ in $\T \circ \T$,  we have $11 =\mu(C_3 \times \T) \leq \mu(\T \circ \T)$. Since the minimal degree of $\T \circ \T$ must be afforded by a core-free subgroup, we make the following claim: \\
\\
\noindent \underline{Claim:} If $L$ is a core-free subgroup of $\mathcal{T}  \circ  \mathcal{T}$, then $|\mathcal{T}  \circ  \mathcal{T}:L| \geq 24$. \\
\\
Suppose for a contradiction that $\core(L)=\{1\}$ and $|\mathcal{T}  \circ  \mathcal{T}:L| < 24$. Then, since $|\mathcal{T}  \circ
\mathcal{T}|=2^5 \cdot 3^2$, we have $12 < |L| \leq 24$; where the last inequality follows since if $|L|>24$, then $\mu(\T \circ \T) <12$ and a product of powers of 2 and 3, yet $11 \leq \mu(\T \circ \T)$. This leaves three cases on the order of $L$: either $L$ has order $16, 18$ or $24$.  \vspace{6pt}

First suppose that $L$ has order $16$. Then $L$ is a $2$-group and is thus contained in the unique Sylow $2$-subgroup of $\mathcal{T} \circ \mathcal{T}$, namely $Q_8 \circ Q_8$. Now
$Q_8 \circ Q_8$ is a nilpotent group and $L$ is a subgroup of index $2$, thus is normal in $Q_8 \circ Q_8$. Therefore $L$ contains the centre
of $Q_8 \circ Q_8$ which is also the centre of $\mathcal{T} \circ \mathcal{T}$, however this contradicts the assumption that $\core(L)$ is trivial. \vspace{6pt}

Now suppose that $L$ has order $18$. Then $L$ by Sylow theory, $L$ has a unique Sylow $3$-subgroup, $\Syl_3$ of order $9$, and is thus a semidirect product of $\Syl_3$ by cyclic group subgroup of order $2$. Write $L=\Syl_3 \rtimes \langle w \rangle$. By Lemma \ref{lemma:structure}, any element of order $3$ in $\mathcal{T}  \circ \mathcal{T}$
normalizes $Q_8 \circ Q_8$. On the other hand, $w$ normalizes $\Syl_3$ in $L$. Thus $$[w,\Syl_3]=w^{-1}\Syl_3^{-1}w\Syl_3 \subset (Q_8 \circ
Q_8) \cap \Syl_3=\{1\}.$$ So $w$ in fact must commute with $\Syl_3$ and since the only element which does this is the central involution, we
have $\langle w \rangle= Z(\mathcal{T}  \circ  \mathcal{T})$. Again this contradicts that $\core(L)=\{1\}$. \vspace{6pt}

Finally suppose that $L$ has order $24$. Then $L$ has a Sylow $2$-subgroup of order $8$ which must be contained in a copy of $Q_8 \circ Q_8$ and which is necessarily  core-free subgroup in  $Q_8 \circ Q_8$.  This subgroup would then afford a transitive faithful permutation representation of $Q_8 \circ Q_8$ of degree $4$, contradicting that $\mu(Q_8 \circ Q_8)=8$ by Example \ref{example:quatcentral}.
\vspace{6pt}

Therefore any core-free subgroup of $\mathcal{T}  \circ  \mathcal{T}$ has index at least $24$ which proves $\mu(\T \circ \T)=24$, completing the proof.
\end{proof}

\begin{proposition} \label{proposition:icosa}
We have $\mu(\I \circ \I)=120$.
\end{proposition}

\begin{proof}
The map 
\begin{eqnarray*}
\I \times \I  &\longrightarrow& \Sym(\I); \\ 
(g,h) &\mapsto& \sigma_{(g,h)}: x \mapsto g^{-1}xh, \quad \text{ for all} \, \, x \in \I,
\end{eqnarray*}
yields an embedding of $\I \circ \I$ in $\Sym(\I)$ and so $\mu(\I \circ \I) \leq 120$. On the other hand, since we know that $\mu(\I \circ \I)$ is afforded by a core-free subgroup of minimal index, we seek to prove that  this minimal index is at least $120$. Let $\overline{(-1,1)}$ denote the unique central element of order $2$ in $\I \circ \I$. By Lemma \ref{lemma:icosahedral}, we have a short exact sequence: 
$$\{1\} \longrightarrow \langle \overline{(-1,1)} \rangle \longrightarrow \I \circ \I  \longrightarrow \Alt(5) \times \Alt(5)  \longrightarrow \{1\}.$$ Suppose $L$ is a core-free subgroup of $\I \circ \I$. Then it is isomorphic to its image in $\Alt(5) \times \Alt(5)$ and so we may rephrase the condition on the core-free subgroup affording the minimal degree of $\I \circ \I$ as being the largest subgroup of $\Alt(5) \times \Alt(5)$ such that the above exact sequence splits. We observe that such a subgroup is necessarily core-free in $\Alt(5) \times \Alt(5)$. \vspace{6pt}

Let $A=\{(a,a) \, | \, a \in \Alt(5)\}$, the diagonal subgroup of the direct product. By the simplicity of $\Alt(5)$, it is a core-free subgroup of order $60$. We make the following claim: \\
\\
\underline{Claim:} If $M$ is a subgroup of $\Alt(5) \times \Alt(5)$ of order strictly greater than $60$, then $M$ is not core-free and/or the exact sequence at $M$ does not split.\\
\\ 
Suppose that the order of $M$ is strictly greater than $60$. Observe that $|M|$ divides $60^2=2^4\cdot 3^2 \cdot 5^2.$ \vspace{6pt}

Suppose that $3^2 \cdot 5$ divides the order of $M$. Then $M$ contains a Sylow $3$-subgroup isomorphic to $C_3 \times C_3$ and a Sylow $5$-subgroup of order $5$. Without loss of generality, we may suppose that $(\alpha, 1)$ or $(\alpha, \beta)$ is the generator of the Sylow $5$-subgroup of $M$. Let $(\gamma, 1)$ be an element of order $3$ in the Sylow $3$-subgroup. Then by standard theory of the alternating group, it follows that the subgroup  $\langle (\alpha, \beta), (\gamma,1)\rangle $ of $M$,  contains $\Alt(5) \times \{1\}$ and so $M$ cannot be core-free. \vspace{6pt}

We observe here that if $3\cdot 5^2$ divides the order of $M$, then a similar argument given immediately above demonstrates again that $M$ contains a $\Alt(5) \times \{1\}$ as a subgroup and so cannot be core-free. \vspace{6pt}

Now suppose that $2^3$ divides the order of $M$. Then $M$ contains a Sylow $2$-subgroup isomorphic to $C_2 \times C_2 \times C _2$ since the Sylow $2$-subgroup of $\Alt(5) \times \Alt(5)$ is elementary abelian of rank $4$. Identifying $M$ with its pre-image in $\I \circ \I$ we find that this Sylow $2$-subgroup of $M$ is contained in a copy of $Q_8 \circ Q_8$, a Sylow $2$-subgroup of $\I \circ \I$. Extending $M$ by the centre of $\I \circ \I$ forces an elementary abelian $2$-group of rank $4$ to exist inside $Q_8 \circ Q_8$. But since $Q_8 \circ Q_8$  has order $32$ and contains multiple elements of order $4$, we have a contradiction. A similar but more immediate contradiction is reached when we suppose that $2^4$ divides the order of $M$. \vspace{6pt}

Thus the only remaining case to consider is when $M$ has order $100$. In this case, $M$ contains a copy of $C_5 \times C_5$ as its Sylow $5$-subgroup and since $\Alt(5)$ does not contain any subgroups of order $20$, that is, no $\{2,5\}$-Hall subgroups, it follows that $M$ is isomorphic to $D_5 \times D_5$ (the direct product of two copies of the dihedral group of order $10$).  Now, identifying $M$ with its pre-image in $\I \circ \I$ we find that the elements of order $5$ are normalised by elements of order $2$. However, by the uniqueness of the involution in the quaternion group and elementary Sylow theory, the normaliser of the group of order $5$ in $\I$ is a cyclic group of order $4$ and so the only element of order $2$ in $\I$ that normalises an element of order $5$ is the central element. Thus it follows that the central element of $\I \circ \I$ is contained in $M$ and so $M$ cannot be core-free. \vspace{6pt}

This completes the proof of the claim and so $A$ is the largest subgroup of $\Alt(5) \times \Alt(5)$ such that the exact sequence at $A$ splits. Therefore, identifying $A$ with is pre-image once more, we have proved   that $\mu(\I \circ \I)=|\I \circ \I:A|=120$. 
\end{proof}

\begin{remark} 
In calculating the minimal degrees of $\T \circ \T$ and $\I \circ \I$, we have exhibited examples of groups that have quotients whose minimal degree is larger than that of the group itself. Thus the groups $\T \times \T$ and $\I \times \I$ are {\it exceptional} (not to be confused with the Coxeter group sense of the word) and the central products are called {\it distinguished}. We will show in Section \ref{section:exceptional} that these groups fall into a more general framework and produce similar examples there. 
\end{remark}

\subsection{The Minimal Degrees of $W(F_4)$ and $W(H_4)$}

We recall a well-known fact that every Coxeter group acts faithfully on its associated root system. Moreover when the Coxeter group is
irreducible, roots of any given length are contained in a single orbit, on which the group also acts faithfully (see \cite{H90}). Thus the size
of any orbit in the root system is always an upper bound for the minimal degree of a Coxeter group. It is the case that for $W(H_4)$ and
$W(F_4)$, we cannot do any better than the size of their root systems for their minimal degrees. Below we state convenient structures of
$W(H_4)$ and $W(F_4)$ that allow us to calculate their minimal degrees. A proof of these structures can be found in \cite{TL07}.

\begin{proposition} \label{proposition:abstractisomorphism} 
We have these abstract isomorphisms;
\begin{align*}
W(H_4) & \cong (\mathcal{I} \circ \mathcal{I}) \rtimes C_2,  \\
 W(F_4) & \cong (Q_8 \circ Q_8) \rtimes (\Sym(3) \times \Sym(3)) \cong (\mathcal{T} \circ \mathcal{T}) \rtimes (C_2 \times C_2),
 \end{align*} under appropriate actions.
\end{proposition}

Now the root system of the Coxeter group $W(F_4)$ consists of $48$ roots of two lengths: $24$ long and $24$ short roots (see \cite{H90}), thus $\mu(W(F_4)) \leq 24$. On the other hand by Proposition \ref{proposition:tetra} and Proposition \ref{proposition:abstractisomorphism}, we have  $24=\mu(\mathcal{T} \circ\mathcal{T}) \leq \mu(W(F_4))$ and so we have: 

\begin{proposition}
The minimal degree of $W(F_4)$ is $24$.
\end{proposition}

Similarly, the root system of the Coxeter group $W(H_4)$ consists of $120$ roots of equal length and are all contained in the same orbit, so $\mu(W(H_4)) \leq 120$. On the other hand, by Proposition  \ref{proposition:abstractisomorphism} and Proposition \ref{proposition:icosa}, we have $120 =\mu(\I \circ \I) \leq \mu(W(H_4))$  and so we have: 

\begin{proposition}
The minimal degree of $W(H_4)$ is $120$.
\end{proposition}

\section{The Groups $W(E_6), W(E_7)$ and $W(E_8)$} \label{section:exceptionalcoxeter} 

In this section we will use the fact that for every Coxeter group, there is a well-defined length function which induces a {\it sign}
homomorphism $W \longrightarrow \{\pm 1\}; w \mapsto (-1)^{l(w)}$. The kernel of this homomorphism is an index $2$ subgroup denoted by $W^{+}$ called
the {\it rotation} subgroup. It is the case that for the groups $W(E_6)$ and $W(E_7)$, their rotation subgroups are simple. Now calculating
the minimal permutation degree for a simple group reduces to finding the maximal subgroups, since $\mu(S)$ for $S$ a simple group is furnished by a maximal subgroup of smallest index. This area has been well-studied and so we do not reproduce the proofs here, but will refer to \cite{KL90} when needed. \vspace{6pt}

Let us first deal with the group $W(E_6)$. By Humphreys \cite[Section 2.12]{H90}, its rotation subgroup $W(E_6)^+$ is a simple group isomorphic to
$SU_{4}(\mathbb{F}_2)$ and by \cite[Table 5.2.A]{KL90}, it has minimal degree $27$. On the other hand, $W(E_6)$ acts faithfully on the set of
positive/negative roots of $E_7$ that are not contained in $E_6$. By inspection of the size of the root systems, this set has size $27$ as well.
Therefore the minimal degrees of $W(E_6)$ and its rotation subgroup co-inside, both being $27$. \vspace{6pt}

We now turn to $W(E_7)$. Again by  \cite[Section 2.12]{H90}, this group is a split extension of its rotation subgroup by a cyclic group of order
$2$. Now its rotation subgroup $W(E_7)^+$ is a simple group isomorphic to $O_{7}(\mathbb{F}_2) \cong Sp_{6}(2)$, and so again by \cite[Table 5.2.A]{KL90}, its
minimal degree is $28$. Thus we have the following decomposition $$W(E_7) \cong O_{7}(\mathbb{F}_2) \times C_2,$$ where each direct factor is a
simple group. So by Theorem \ref{theorem:simplegroups} we have $\mu(W(E_7))=28+2=30$.\vspace{6pt}

Before we deal with the group $W(E_8)$, we require a lemma about covering groups.

\begin{lemma} \label{lemma:centralextension}
For $p$ a prime, let $G$ be a $p:1$ non-split central extension of the simple group $S$. Then $$\mu(G) \geq p\mu(S).$$
\end{lemma}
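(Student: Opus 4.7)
The plan is to start from a minimal faithful representation $\mathscr{R}=\{G_1,\ldots,G_n\}$ of $G$, so that $\mu(G)=\sum_{i=1}^{n}[G:G_i]$ with $\bigcap_{i}\core(G_i)=\{1\}$, and to analyse how the central subgroup $Z$ of order $p$ (with $G/Z\cong S$) interacts with each $G_i$. Because $|Z|=p$ is prime and $Z$ is central, for every $i$ either $Z\subseteq G_i$ (in which case $Z\subseteq \core(G_i)$) or $G_i\cap Z=\{1\}$. Partition the index set as $I\sqcup J$ accordingly. The condition $\bigcap_{i}\core(G_i)=\{1\}$ forces $J$ to be non-empty, since otherwise $Z$ would be contained in every core.

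The central step is to show that for every $i\in J$ the image $\overline{G}_i:=G_iZ/Z$ is a core-free subgroup of $S$. I would set $M_i=\bigcap_{g\in G}(G_iZ)^g$, which is a normal subgroup of $G$ containing $Z$, and observe that $M_i/Z\trianglelefteq S$. By simplicity of $S$, either $M_i=Z$ or $M_i=G$. The second alternative forces $G_iZ=G$, and combined with $G_i\cap Z=\{1\}$ this displays $G$ as the internal product $G_iZ$ with $Z$ central and meeting $G_i$ trivially, i.e.\ a splitting of the extension $1\to Z\to G\to S\to 1$. This contradicts the non-split hypothesis, so $M_i=Z$, which is exactly the statement $\core_S(\overline{G}_i)=\{1\}$.

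To conclude, I would use $[G:G_i]=[G:G_iZ]\cdot[G_iZ:G_i]=p\cdot[S:\overline{G}_i]$ (the factor $p$ coming from $G_i\cap Z=\{1\}$), together with $[S:\overline{G}_i]\geq \mu(S)$ from the previous step (since a single core-free subgroup already furnishes a faithful transitive representation of the simple group $S$). Because $J$ is non-empty, $\mu(G)=\sum_{i}[G:G_i]\geq [G:G_{i_0}]\geq p\mu(S)$ for any $i_0\in J$, which is the desired inequality.

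The main obstacle, and really the whole point of the argument, is pinpointing exactly where the non-split hypothesis must be invoked: it is needed to rule out the possibility $G_iZ=G$ for some $i\in J$, which would collapse $[S:\overline{G}_i]$ to $1$ and destroy the factor of $p$. Once this is isolated, the rest is straightforward index arithmetic and the standard fact that proper subgroups of simple groups are core-free.
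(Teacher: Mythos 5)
Your proof is correct, and it takes a somewhat different route from the paper's. The paper first argues that the central subgroup $Z=\ker\varphi$ is the unique minimal normal subgroup of $G$, so that a minimal faithful representation is necessarily transitive, given by a single core-free subgroup $L$; since core-free forces $L\cap Z=\{1\}$, the index computation $|G:L|=p|S:\varphi(L)|\geq p\mu(S)$ finishes the argument. You instead start from an arbitrary (possibly intransitive) minimal representation $\{G_1,\ldots,G_n\}$, partition the constituents according to whether they contain $Z$ or meet it trivially, and extract a single constituent $G_{i_0}$ with $G_{i_0}\cap Z=\{1\}$; the same index arithmetic then applies to that one constituent. What your version buys is twofold: it avoids invoking the transitivity-of-minimal-representations fact for groups with a unique minimal normal subgroup (indeed you never need uniqueness of the minimal normal subgroup at all), and it makes explicit exactly where the non-split hypothesis enters, namely to rule out $G_{i_0}Z=G$, which would make $\varphi(G_{i_0})=S$ and collapse the factor $[S:\overline{G}_{i_0}]$ to $1$. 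The paper's proof passes over this last point silently (it only remarks that $\varphi(L)$ need not be maximal), so your treatment is in fact the more careful of the two at that step. Both proofs share the essential computation $[G:L]=p\,[S:\varphi(L)]$ together with the observation that a proper subgroup of a simple group is automatically core-free.
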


\begin{proof}
Let $\varphi$ map $G$ surjectively onto $S$ with kernel $C_p$. It follows that this kernel is the unique
minimal normal subgroup of $G$, for if $N$ is a non-trivial proper normal subgroup of $G$ distinct from $\ker(\varphi)$, then $\varphi(N)$ is a
non-trivial proper normal subgroup of $S$ (since $\varphi$ is non-split), contradicting simplicity. Therefore $\mu(G)$ equals index of the largest subgroup which
does not contain the kernel. Let this subgroup be $L$ and observe that $L$ is isomorphic to its image in $S$ under $\varphi$. Therefore we have
$$\mu(G)=|G:L|=p|S:\varphi(L) |\geq p\mu(S),$$ where the last inequality is necessary since $ \varphi(L)$ need not be maximal in $S$.
\end{proof}

Now by \cite[Section 2.12]{H90} once again, the rotation subgroup of $W(E_8)$ is a $2:1$ non-split central extension of the simple group $O_{8}^{+}(\mathbb{F}_2)$. By \cite[Table
5.2.A]{KL90} this group has minimal degree $120$ and so by Lemma \ref{lemma:centralextension}, $$\mu(W(E_8)^{+}) \geq
2\mu(O_{8}^{+}(\mathbb{F}_2))=240.$$ On the other hand, the root system of $W(E_8)$ has size $240$ as well and so we deduce that
$\mu(W(E_8)^{+})=\mu(W(E_8))=240$.\vspace{6pt}

Summarising this for the exceptional Coxeter groups of type $E$: 

\begin{proposition} 
We have \begin{enumerate}
\item $\mu(W(E_6))=27$,
\item $\mu(W(E_7))=30$,
\item $\mu(W(E_8))=240$.
\end{enumerate}
\end{proposition}

\section{The Groups $W(I_{2}(m))$}

For $m\geq 5$, the groups $W(I_{2}(m))$ are isomorphic to the dihedral groups of order $2m$. The minimal degrees of dihedral groups were calculated by Easdown and
Praeger in \cite{EP88}: we include the full statement of their theorem even though for some values of $n$ are $r$ below, we do not obtain an irreducible Coxeter group. 

\begin{proposition}\cite[Proposition 2.8]{EP88}
For any integer $k=\prod_{i=1}^{m}p_{i}^{\alpha_i} >1$, with the $p_i$ distinct primes, define $\psi(k)=\sum_{i=1}^{m}p_{i}^{\alpha_i}$, with
$\psi(1)=0$. Then for the dihedral group $D_{2^{r}n}$ of order $2^{r}n$, with $n$ odd, we have
\begin{displaymath}
\mu(D_{2^{r}n})= \left\{ \begin{array}{lll}
2^r &\text{if} \quad n=1, 1\leq r \leq 2 \\
2^{r-1} & \text{if} \quad  n=1, r>2 \\
\psi(n) & \text{if} \quad  n>1, r=1 \\
2^{r-1} + \psi(n) & \text{if} \quad  n \geq 1, r>1.\\
\end{array} \right.
\end{displaymath}
\end{proposition}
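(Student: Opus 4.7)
My plan is to prove matching upper and lower bounds case by case. The lower bound comes essentially for free from Theorem \ref{theorem:abelian} applied to the cyclic rotation subgroup, while the upper bound is achieved by an explicit collection of coset spaces of dihedral subgroups.

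Write $D_{2^r n} = \langle \rho, \sigma \mid \rho^{2^{r-1} n} = \sigma^2 = 1,\ \sigma \rho \sigma = \rho^{-1} \rangle$, so $\langle \rho \rangle \cong C_{2^{r-1} n}$. For the lower bound I would use that $\mu$ is monotone with respect to subgroup inclusion: any faithful permutation representation of $G$ restricts faithfully to any subgroup, so $\mu(D_{2^r n}) \geq \mu(C_{2^{r-1} n})$. Applying Theorem \ref{theorem:abelian} to the prime-power decomposition $2^{r-1} n = 2^{r-1} p_1^{a_1} \cdots p_m^{a_m}$ gives $\mu(C_{2^{r-1} n}) = 2^{r-1} + \psi(n)$ when $r \geq 2$, and $\psi(n)$ when $r = 1$. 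This already matches the stated formula in all cases apart from the two small ones $(n,r) = (1,1)$ and $(1,2)$; those I would handle directly by noting $D_2 \cong C_2$ and $D_4 \cong C_2 \times C_2$ and invoking Theorem \ref{theorem:abelian} once more.

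For the upper bound, the case $n = 1$, $r > 2$ is realised by the natural faithful action of $D_{2^r}$ on the $2^{r-1}$ vertices of a regular $2^{r-1}$-gon. In the generic case $n > 1$, I would take the dihedral subgroups
\[
H_i = \langle \rho^{p_i^{a_i}},\, \sigma \rangle \qquad (i = 1, \ldots, m)
\]
of index $p_i^{a_i}$ in $D_{2^r n}$, together with (only when $r \geq 2$) the further dihedral subgroup $H_0 = \langle \rho^{2^{r-1}}, \sigma\rangle$ of index $2^{r-1}$. For each $i \geq 1$ the rotation part $\langle \rho^{p_i^{a_i}} \rangle$ is already normal in $D_{2^r n}$, and a quick order comparison (using $p_i^{a_i} \geq 3$) rules out any larger normal subgroup of $D_{2^r n}$ fitting inside $H_i$, so $\core(H_i) = \langle \rho^{p_i^{a_i}}\rangle$; an analogous analysis pins down $\core(H_0)$. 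Because $\rho$ has order $2^{r-1} n$ and the integers $2^{r-1}, p_1^{a_1}, \ldots, p_m^{a_m}$ are pairwise coprime, the intersection of these cores is
\[
\bigcap_i \core(H_i) = \langle \rho^{\mathrm{lcm}(2^{r-1}, p_1^{a_1}, \ldots, p_m^{a_m})} \rangle = \langle \rho^{2^{r-1} n} \rangle = \{1\},
\]
so the disjoint action on the cosets is faithful. Summing indices yields exactly $\psi(n)$ or $2^{r-1} + \psi(n)$, as required.

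The main subtlety I anticipate is the case $r = 2$: there the subgroup $H_0 = \langle \rho^2, \sigma\rangle$ has index $2$ and is normal, so its core equals $H_0$ itself rather than just its rotation part. In that case one verifies directly that $H_0 \cap \langle \rho^n\rangle = \{1\}$, which works because $H_0$ contains only rotations of odd order $n$ and so cannot contain the central involution $\rho^n$. With that edge case and the direct treatment of $D_2$ and $D_4$ in hand, the case split closes and the formula follows.
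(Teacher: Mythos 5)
Your proposal is correct, but note that the paper itself offers no proof of this proposition at all: it simply quotes the result from Easdown and Praeger \cite{EP88}. Your argument therefore supplies a complete, self-contained verification of what the paper only cites. The structure is the natural one and checks out in every case: the lower bound $\mu(D_{2^r n}) \geq \mu(C_{2^{r-1}n})$ via monotonicity of $\mu$ under subgroup inclusion combined with Theorem \ref{theorem:abelian} matches the claimed value everywhere except for $D_2 \cong C_2$ and $D_4 \cong C_2 \times C_2$, which you correctly peel off and handle by Theorem \ref{theorem:abelian} directly; and the upper bound via the family $H_0, H_1, \ldots, H_m$ works because $\core(H_i) = \langle \rho^{p_i^{a_i}} \rangle$ (the normal closure of any reflection contains $\langle \rho^2 \rangle$, which is too large to fit inside $H_i$ since $p_i^{a_i} \geq 3$), so the cores intersect in $\langle \rho^{2^{r-1}n} \rangle = \{1\}$ and the sum of indices is exactly $\psi(n)$ or $2^{r-1} + \psi(n)$. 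You also correctly isolate the one genuine edge case, $r = 2$, where $H_0$ is normal of index $2$ and $\core(H_0) = H_0$; your observation that the rotation part of $H_0$ has odd order and so misses the central involution $\rho^n$ closes that gap. The one stylistic remark: since the paper's stated characterisation of $\mu$ is as the minimum of $\sum |G : G_i|$ over collections with $\bigcap \core(G_i) = \{1\}$, your upper bound is phrased in exactly the right language, and nothing further is needed.
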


\section{Direct Products} \label{section:directproduct}

Recall that an arbitrary finite Coxeter group is the direct product of irreducible Coxeter groups. While in many areas of study it is enough just to
concentrate on the irreducible components, this is not the case when we wish to consider minimal degrees of arbitrary finite Coxeter groups.  \vspace{6pt}

We can make the following observations with what is known about minimal degrees of direct products. The groups $W(A_n)$ and $W(B_n)$ are
contained in the Wright class $\mathscr{C}$ (recall from Section \ref{section:background}, these groups contain a nilpotent subgroup of the same minimal degree) and so
$$\mu(W(A_n) \times W(B_n))=\mu(W(A_n)) + \mu(W(B_n)).$$ Also since $W(H_3)$ and $W(E_7)$ are direct products of simple groups, we have by
Theorem \ref{theorem:simplegroups} $$\mu(W(H_3) \times W(E_7))=\mu(W(H_3)) + \mu(W(E_7)).$$

On the other hand, we have the following isomorphism for odd $n$ greater than or equal to $5$; $W(D_n) \times W(A_1) \cong W(B_n)$. So we have
$$\mu(W(D_n) \times W(A_1))=\mu(W(B_n))$$ but $\mu(W(D_n))+\mu(W(A_1))=2n+2 > 2n=\mu(W(B_n))$.  \vspace{6pt}

Thus the minimal degree of an arbitrary Coxeter group is not in general equal to the sum of the minimal degrees of its irreducible components.\vspace{6pt}

\begin{remark} 
The motivation for considering the minimal degree of irreducible Coxeter groups was to produce further examples of \eqref{eq:directsum} being a strict inequality, that is,  groups $G$ and $H$ that satisfy the inequality 
$$\mu(G \times H) < \mu(G) + \mu(H).$$ All of our examples thus far have had the property that $\mu(G)=\mu(G \times H)$, which is always a sufficient condition. 
We are currently unaware of any examples of groups $G$ and $H$ that are not themselves non-trivial direct products and satisfy the cascade of strict inequalities  \begin{equation}  \label{eqn:cascade} \max\{\mu(G), \mu(H)\} < \mu(G \times H) < \mu(G) + \mu(H). \end{equation} Here we stipulate that $G$ and $H$ are not non-trivial direct products for the following reason: let $G=W(D_5)$ and $H=C_2 \times C_2$, then $\max\{\mu(G), \mu(H)\}=10$ and $\mu(G)+\mu(H)=14$. However,
\begin{eqnarray*}
G \times H &\cong& ((C_2 \times C_2 \times C_2 \times C_2) \rtimes \Sym(5)) \times (C_2 \times C_2)  \\
		&\cong & ((C_2 \times C_2 \times C_2 \times C_2 \times C_2) \rtimes \Sym(5)) \times C_2 \\
		& \cong & W(B_5) \times C_2.
\end{eqnarray*}
Therefore $\mu(G \times H)=12$ and so we do get an example of \eqref{eqn:cascade}, however it was manufactured from an example of \eqref{eq:directsum} being a strict inequality. In fact, one can easily manipulate any example where \eqref{eq:directsum} is a strict inequality, to provide an example that satisfies \eqref{eqn:cascade}. Thus we seek an example of groups $G$ and $H$ satisfying \eqref{eq:directsum} and the extra condition that $\max\{\mu(G),\mu(H)\} < \mu(G \times H).$
\end{remark}

\section{New Examples of Distinguished Quotients} \label{section:exceptional}
In this section, we exhibit further examples of exceptional groups motivated by our calculations of the minimal degrees of the binary tetrahedral and icosahedral groups in Section \ref{section:realreflections}. \vspace{6pt}

We return to the quaternions to define the binary octahedral group and calculate its minimal degree. Let $\gamma:=\frac{1}{\sqrt{2}}(1+i)$. Then $\gamma$ has order $8$ and from our description of $\mathcal{T}$ in Subsection \ref{subsection:polyhedral}, it is easily verified that
$\gamma$ normalises both $Q_8$ and $\mathcal{T}$. The group $\mathcal{O}:=\mathcal{T}\langle \gamma \rangle$ generated by $\mathcal{T}$ and
$\gamma$ is called the {\it binary octahedral group}. Since $\gamma^2=i$, it follows that $\mathcal{O}$ can be generated by $\omega$ and
$\gamma$ and has order $48$. A presentation for $\mathcal{O}$ can be given thus, $$\langle o_1, o_2, o_3 \ | \
o_{1}^{4}=o_{2}^{3}=o_{3}^{2}=o_{1}o_{2}o_{3} \rangle$$ or equivalently, since $o_{3}^{2}=o_{1}o_{2}o_{3}$ implies $o_{3}=o_{1}o_{2}$, we have
$$\langle o_1, o_2 \ | \ o_{1}^4=o_{2}^3=(o_{1}o_{2})^2 \rangle,$$ by identifying $o_{1}$ with $\gamma$ and $o_2$ with $-\omega^2$ (see \cite[page 17]{Su07}).

\begin{lemma}
We have $\mathcal{O}/\{\pm 1\} \cong \Sym(4)$. 
\end{lemma}

\begin{proof}
We may give $\mathcal{O}/\{\pm 1\}$ the presentation $$\langle o_1, o_2 \ | \ o_{1}^4=o_{2}^3=(o_{1}o_{2})^2 =1\rangle,$$ which is the well-known presentation for $\Sym(4)$.
\end{proof}

\begin{proposition}
We have $\mu(\mathcal{O})=16$, with an explicit representation on the right cosets of $\langle \omega \rangle$:
\begin{eqnarray*}
\mathcal{O} &\hookrightarrow& \Sym(16); \\
    o_1       &\mapsto&  (1 \, 3 \, 7 \, 6 \, 2 \, 5 \, 10 \, 4)(8 \, 13 \, 15 \, 11 \, 9 \, 12 \, 16 \, 14), \\
    o_2       &\mapsto& (1 \, 2)(3 \, 4 \, 9 \, 5 \, 6 \, 8)(7 \, 11 \, 13 \, 10 \ 14 \, 12)(15 \, 16).
\end{eqnarray*}
\end{proposition} 

\begin{proof}
We observe that $\gamma^4=-1$ and that  $j^{-1}\gamma j = \gamma^{-1}$, and so $\langle
\gamma, j \rangle \cong Q_{16}$. By Example \ref{example:quaternions}, $\mu(Q_{16})=16$ and so $16 \leq \mu(\mathcal{O})$. On the other
hand,  noting that $\mathcal{O}$ can be generated by $\omega$ and $\gamma$, recalling $\omega= \tfrac{1}{2}(-1+i+j+k)$ is an element of order $3$, we explicitly list the elements of the following groups:
 \begin{eqnarray*}
  \langle \omega \rangle &=& \big\{ 1, \tfrac{1}{2}(-1+i+j+k), \tfrac{1}{2}(-1-i-j-k) \big\}, \\
   \langle \gamma \omega  \gamma^{-1} \rangle &=& \big\{ 1, \tfrac{1}{2}(-1+i+j-k), \frac{1}{2}(-1-i-j+k) \big\}.
 \end{eqnarray*} Thus it immediately follows that  
$\langle \omega \rangle \cap \langle \omega^{\gamma} \rangle= \{1\}$ and so $\langle \omega \rangle$ is a core-free subgroup of
index $16$. Therefore $\mu(\mathcal{O})=16$. 
\end{proof}

Since $Q_{16}$ is a proper subgroup of $\mathcal{O}$ with $\mu(Q_{16})=\mu(\mathcal{O})=16$, $\mathcal{O}$ is contained in Wright's class, thus we immediately have:

\begin{corollary} 
We have $\mu(\mathcal{O} \times \mathcal{O})= 32$.
\end{corollary}

\begin{remark} \label{remark:normalise2}
It can be shown using elementary Sylow theory that the normaliser of $\langle \omega \rangle$ in $\mathcal{O}$ has order $12$ and is therefore isomorphic to $C_3 \rtimes C_4$ since $Q_8$ has a unique involution. This simple fact will be used when calculating the minimal degree of $\mathcal{O} \circ \mathcal{O}$. Again since we know in advance that we seek a core-free subgroup of least index, the following can easily be verified on a computer. 
\end{remark}

\begin{proposition}
The group $\mathcal{O} \times \mathcal{O}$ is exceptional with distinguished quotient $\mathcal{O} \circ \mathcal{O}$ which has minimal degree $48$.
\end{proposition}
\begin{proof}
The now familiar map $\mathcal{O} \times \mathcal{O} \longrightarrow \Sym(\mathcal{O})$ sending the pair $(g,h)$ to the permutation that maps $x \mapsto g^{-1}xh$ for all $x$ in $\mathcal{O}$ once again yields an embedding of $\mathcal{O} \circ \mathcal{O}$ in $\Sym(\mathcal{O})$, so $\mu(\mathcal{O} \circ \mathcal{O}) \leq 48$. Observe that $Q_{16} \circ Q_{16}$ is a subgroup whose minimal degree is $16$, so $16 \leq \mu(\mathcal{O} \circ \mathcal{O}) \leq 48$. Since $\mathcal{O} \circ \mathcal{O}$ has a unique minimal normal subgroup generated by the central element of order $2$, it suffices to show that any core-free subgroup must have index at least $48$. If $L$ is such a subgroup, then since $\mathcal{O} \circ \mathcal{O}$ has order $2^7 \cdot 3^2$, a quick calculation shows that $24 \leq |L| \leq 72$. As in the proof of Proposition \ref{proposition:icosa}, let  $\overline{(-1,1)}$ denote the central element of order $2$ and consider the short exact sequence:
$$\{1\} \longrightarrow \langle \overline{(-1,1)} \rangle \longrightarrow \mathcal{O} \circ \mathcal{O}  \longrightarrow \Sym(4) \times \Sym(4)  \longrightarrow \{1\}.$$ We know that $\mu(\mathcal{O} \circ \mathcal{O}) $ is given by the largest subgroup $L$ of $\Sym(4) \times \Sym(4)$ such that the above exact sequence splits at $L$; such a subgroup is necessarily core-free in $\Sym(4) \times \Sym(4)$. For what follows it will be convenient think of $\Sym(4)$ via its well-known permutation representation that we gave in the proof of Theorem \ref{theorem:Dn}. Thus the direct product $\Sym(4) \times \Sym(4)$ is abstractly isomorphic to \begin{equation} \label{equation:sym4}  (N_1 \times N_2) \rtimes (\Sym(3) \times \Sym(3)), \end{equation}  under the appropriate actions, where $N_1 \cong N_2 \cong C_2 \times C_2$. We make the following claim:\\
\\
\noindent \underline{Claim:} If $L$ is a subgroup of $\Sym(4) \times \Sym(4)$  of order strictly greater than $24$, then $L$ is not core-free and/or the above exact sequence does not split.\\
\\ 
Suppose for a contradiction that $L$ has order strictly greater than $24$. If $2^4$ divides the order of $L$, then $L$ has a Sylow $2$-subgroup of order $2^4$. Identifying $L$ with its pre-image in $\mathcal{O} \circ \mathcal{O}$ forces this Sylow $2$-subgroup to be contained in $Q_{16} \circ Q_{16}$ and moreover to be core-free in it as well. Therefore this Sylow $2$-subgroup would afford a faithful representation of  $Q_{16} \circ Q_{16}$ degree $8$ which contradicts $\mu(Q_{16} \circ Q_{16})=16$. A similar contradiction is reached when we assume $2^5$ divides the order of $L$. Thus the remaining cases to consider are $|L|=72$ and $|L|=36$. \vspace{6pt}

Suppose that $|L|=72$. Then $L$ contains a Sylow $2$-subgroup of order $8$ and a Sylow $3$-subgroup of order $9$, which must be isomorphic to $C_3 \times C_3$ and contained in the top group of \eqref{equation:sym4}. Now, by a simple observation on the order we find that the Sylow $2$-subgroup of $L$ intersects non-trivially with the base group $N_1 \times N_2$. Let $n_1n_2$ be an element in this intersection, where $n_1 \in N_1$ and $n_2 \in N_2$, and let $\alpha$ be an element of order $3$ which commutes with $N_2$, which necessarily exists. Now, it can be readily seen that $n_{1}^{\alpha} \neq n_1$ and so $\langle n_1n_2, (n_1n_2)^\alpha \rangle$ is isomorphic to  $ C_2 \times C_2 \times C_2$ and so must contain the base group $N_1$. Thus $L$ cannot be core-free. \vspace{6pt}

Now suppose that $|L|=36$. In this case, the Sylow $2$-subgroup need not intersect the base group, but since $L$ must contain a copy of $C_3 \times C_3$, $L$ is forced to be the top group $\Sym(3) \times \Sym(3)$ of \eqref{equation:sym4}. Identifying $L$ with its pre-image in $\mathcal{O} \circ \mathcal{O}$ we find that the elements of order $3$ are normalised by elements of order $2$. However by Remark \ref{remark:normalise2}, the only element to do this in $ \mathcal{O}$ is the central element. Thus $\overline{(-1,1)}$ is contained $L$ and so $L$ is not core-free. \vspace{6pt}

This verifies the claim and so the largest core-free subgroup of $\mathcal{O} \circ \mathcal{O}$ has index $48$ proving $\mu( \mathcal{O} \circ \mathcal{O})=48$, and so $\mathcal{O} \times \mathcal{O}$ is an exceptional group with is thus a distinguished quotient the central product.
 \end{proof}

The following theorem is due to Easdown and Praeger.

\begin{theorem}\cite[Theorem 2.1]{EP88} \label{theorem:EP}
Let $p$ be a prime and let $G_1$ and $G_2$ be non-cyclic $p$-groups with centres $Z_1= \langle a_1 \rangle$ and $Z_2= \langle a_2 \rangle$
respectively of the same order. Then,
\begin{enumerate}
\item[(i)] if $p$ is odd, then $G_1 \times G_2$ is an exceptional group with distinguished subgroup $N=\langle (a_1,a_2) \rangle$; 
\item[(ii)] if $p=2$, $\mu(G_1)>\mu(G_2)$ and $G_2$ is not a generalised quaternion group, then $G_1 \times G_2$ is an exceptional group with
distinguished subgroup $N=\langle (a_1,a_2) \rangle$.
\end{enumerate}
\end{theorem}

Following our calculations in the above proposition and in Propositions \ref{proposition:tetra} and \ref{proposition:icosa},  we now extend the above theorem to include nilpotent groups of odd order with cyclic centres. More generally, we can apply this central product construction to groups $G$ in
Wright's class $\mathscr{C}$ whose nilpotent subgroup $G_1$ has a cyclic centre that coincides with the centre of $G$.

\begin{proposition} \label{proposition:quotient}
Let $G$ and $H$ be groups in $\mathscr{C}$. Suppose that $G_1$ and $H_1$ are the corresponding nilpotent groups of $G$ and $H$ respectively,
such that they have odd order with
$$Z(G)=Z(G_1)\cong Z(H)=Z(H_1) \cong \prod_{i=1}^{r}C_{p_{i}^{\alpha_{i}}},$$ for distinct odd primes $p_i$. Express $G_1=\prod_{i=1}^{r}G_{p_{i}}$ and $H_1=\prod_{i=1}^{r}H_{p_{i}}$, where the $G_{p_{i}}$
and the $H_{p_i}$ are the Sylow $p_i$-subgroups of $G_1$ and $H_1$ respectively. Then $G \times H$ is exceptional with distinguished quotient
the central product $G \circ H$.
\end{proposition}

\begin{proof}
We first observe that $G_1 \circ H_1 \leq G \circ H$ since $Z(G)=Z(G_1)$ and $Z(H)=Z(H_1)$. Moreover, since $\mu(G_1 \times H_1)=\mu(G \times
H)$, it suffices to show that $G_1 \times H_1$ is exceptional with distinguished quotient $G_1 \circ H_1$. \vspace{6pt}

For each $g \in G_1$ we may write $g=g_{1}\ldots g_{r}$ where $g_i \in G_{p_i}$ and similarly for $h \in H_1$ we have $h=h_{1}\ldots h_{r}$
where $h_i \in H_{p_i}$. This yields an isomorphism
\begin{eqnarray*}
G_1 \times H_1 &\longrightarrow& \prod_{i=1}^{r}(G_{p_i} \times H_{p_i}) \\
 (g,h) &\mapsto& \big((g_1, h_1),\ldots,(g_r, h_r)\big).
 \end{eqnarray*}
Since $Z(G_1) \cong Z(H_1) \cong \prod_{i=1}^{r}C_{p_{i}^{\alpha_{i}}}$, for simplicity of notation we will let $Z=\prod_{i=1}^{r}Z_{p_{i}}$
simultaneously denote $Z(G_1)$ and $Z(H_1)$, and write for $z \in Z$, $z=z_1\ldots z_r$ with $z_i \in Z_{p_{i}}$. Thus for each $i$, we may
write $Z(G_{p_i})=Z(H_{p_i})=Z_{p_{i}} \cong C_{p_{i}^{\alpha_i}}$, and so we may form the central product $$G_{p_{i}} \circ
H_{p_{i}}=(G_{p_{i}} \times H_{p_{i}})/\overline{Z_{p_{i}} \times Z_{p_{i}}},$$ where $\overline{Z_{p_{i}} \times Z_{p_{i}}}=\langle(z_i, z_i) \
| \ z_i \in Z_{p_{i}} \rangle \cong C_{p_{i}^{\alpha_{i}}}$. This gives rise to a map

\begin{eqnarray*}
\eta: G_1 \times H_1 &\longrightarrow& \prod_{i=1}^{r}(G_{p_i} \circ H_{p_i}) \\
 (g,h) &\mapsto& \big(\overline{(g_1, h_1)},\ldots,\overline{(g_r, h_r)}\big).
 \end{eqnarray*}

\noindent \underline{\textbf{Claim:}} \quad $\ker \eta = \overline{Z \times Z}=\langle (z,z) \ | \ z \in Z \rangle$. \\

Let $z=z_1 \ldots z_r \in Z$, then $$\eta(z,z) = \big( \overline{(z_1, z_1)}, \ldots, \overline{(z_r, z_r)} \big)= \big( \overline{(1, 1)},
\ldots, \overline{(1,1)} \big),$$ so $\overline{Z \times Z} \subseteq \ker \eta$. Now suppose that $(g,h) \in \ker \eta$. So $\eta(g,h)  = \big(
\overline{(1, 1)}, \ldots, \overline{(1,1)} \big),$ and so writing $g=g_1\ldots g_r$ and $h=h_1\ldots h_r$, we find that $$\overline{(g_1,
h_1)}=\overline{(1, 1)}, \ldots, \overline{(g_r, h_r)}=\overline{(1,1)}.$$ Therefore, $$(g_1, h_1)=(z_1, z_1), \ldots, (g_r, h_r)=(z_r, z_r),$$
and we may write $(g, h)=(z, z)$ where $z=z_{1}\ldots z_{r}$ with $z_i \in Z_{p_{i}}$. Therefore $(g, h) \in \overline{Z \times Z}$ proving
$\ker \eta \subseteq \overline{Z \times Z}$, whence the claim.\vspace{6pt}

We therefore have an isomorphism $G_1 \circ H_1 \cong \prod_{i=1}^{r}(G_{p_i} \circ H_{p_i})$. By Theorem \ref{theorem:EP}, $\mu(G_{p_{i}} \circ
H_{p_{i}}) > \mu(G_{p_{i}} \times H_{p_{i}})$ for each $i$, and since each $G_{p_{i}} \circ H_{p_{i}}$ lies in Wright's class $\mathscr{C}$,  we have
$$\mu(G_1 \circ H_1)= \sum_{i=1}^{r} \mu(G_{p_{i}} \circ H_{p_{i}}) > \sum_{i=1}^{r} \mu(G_{p_{i}} \times H_{p_{i}})=\mu(G_1 \times H_1).$$ So
$G_1 \times H_1$ is exceptional with distinguished quotient $G_1 \circ H_1$. Since $G_1 \circ H_1 \leq G \circ H$, we have $$\mu(G \circ H) \geq
\mu(G_1 \circ H_1) > \mu(G_1 \times H_1)=\mu(G \times H),$$ which completes the proof of the proposition.
\end{proof}

We now give the following concrete examples which once again involve the monomial reflection groups $G(d,e,n)$. We very briefly describe the relevant structure properties of these groups (see \cite{TL07} for a compressive treatment of these groups). \vspace{6pt}

Let $d,e$ and $n$ be positive integers with $e$ dividing $d$. Let $C_{d}$ be a cyclic group of order $d$ and let $A(d,e,n)$ be the subgroup of the direct product of $n$ copies of $C_d$ defined as follows: $$A(d,e,n):= \{ (\theta_1, \theta_2,  \ldots,  \theta_{d}) \, | \, (\theta_{1}\theta_{2}\ldots\theta_{n})^{\tfrac{d}{e}}=1 \}.$$  This group naturally comes equipped with an action of the symmetric group $\Sym(n)$ by permuting the coordinates and thus the group $G(d,e,n)$ is then defined as the following semidirect product $$G(d,e,n):=A(d,e,n) \rtimes \Sym(n).$$ This is a normal subgroup of the full wreath product $C_d \wr \Sym(n)$. 

\begin{example}

Let $p$ be an odd prime. From our description above, the group $G(p,1,p)$ is the full wreath product of the cyclic group $C_p$ with the symmetric group $\Sym(p)$. From
now on, denote the base group of the wreath product $C_p \wr \Sym(p)$ by $A$ and let $\theta_{1}, \ldots , \theta_{p}$ be the standard
generators. \vspace{6pt}

Let $W$ denote $G(p,1,p)$ and let $b:=(1 \ 2 \ \ldots \ p) \in \Sym(p)$. Define a proper subgroup of $W$, $$H:=A\langle b \rangle= A \rtimes
\langle b \rangle \cong (\underbrace{C_p \times \ldots \times C_p}_{p}) \rtimes C_p.$$ Clearly $H$ is a $p$-group and a little calculation shows
that the centres of $W$ and $H$ coincide at $\langle \theta_{1}\ldots\theta_{p} \rangle \cong C_p$. Now it is easy to see that $\mu(W) \leq p^2$, and by our first example, Theorem \ref{theorem:abelian}, we have  $$p^2=\mu(A)\leq \mu(H) \leq \mu(W) \leq p^2.$$ Thus $\mu(W)=\mu(H)=p^2$. Therefore this class of groups satisfies the conditions of
Proposition \ref{proposition:quotient} with $H$ as the nilpotent subgroup and so we have $\mu(H \circ H)
> \mu(H \times H)$. Therefore $H \times H$ and $W \times W$ are exceptional groups with distinguished quotients $H \circ H$ and $W \circ W$ respectively. 
\end{example}

\begin{example}
Again let $p$ be an odd prime and let $W$ be the full wreath product $C_p \wr \Sym(p) \cong G(p,1,p)$ as above. Let $G$ be the group $G(p,p,p)$, which is a normal subgroup of index $p$ inside $G(p,1,p)$. Let $b=(1 \ 2 \ \ldots \ p) \in \Sym(p)$ and define a proper subgroup $$K:=A(p,p,p) \rtimes \langle b \rangle \cong (\underbrace{C_p
\times \ldots \times C_p}_{p-1}) \rtimes C_p.$$ From the previous example, $\mu(W) =p^2$ and by \cite[Proposition 3.13]{S08} we again have
$\mu(G)=\mu(K)=p^2$. \vspace{6pt}

We claim that the centres of $W, G$ and $K$ coincide. To see this, let $\theta_1, \theta_2, \ldots, \theta_p$ be the standard generators of the
base group $W$. Then a generating set for $A:=A(p,p,p)$ is given by $$c_1=\theta_{1}\theta_{2}^{-1}, c_2=\theta_{2}\theta_{3}^{-1}, \ldots,
c_{p-1}=\theta_{p-1}\theta_{p}^{-1}.$$ It is clear that $\theta:=\theta_{1}\theta_{2}\ldots\theta_{p}$ is stable under all permutations of the
symmetric group and in fact $\langle \theta \rangle=Z(W)\cong C_p$. Observe also that $\theta=c_1c_{2}^2c_{3}^3\ldots c_{p-1}^{p-1}$ and so
$\theta$ is also contained in both $G$ and $K$. Moreover, it is easily confirmed that the centres of $G$ and $K$ are also cyclic of order $p$,
and thus generated by $\theta$. Therefore, $Z(W)=Z(G)=Z(K)$. Since $K$ is a $p$-group, we have by Proposition \ref{proposition:quotient}, $K \times K$
and $G \times G$ are exceptional groups.
\end{example}

\section{Acknowledgements}

The author sincerely thanks his Ph.D. supervisor David Easdown for his support and guidance; many of the above results were forged in our supervisory meetings. Sincere thanks are given  to Anthony Henderson for many informative discussions on reflections groups and for helpful comments in the earlier stages of this article. We also acknowledge the assistance of the computational algebra system Magma, \cite{BCFS10} in explicitly computing minimal degrees of the binary polyhedral groups. 


\begin{thebibliography}{10}

\bibitem{CB06}
J.J. Cannon and Bosma W.
\newblock {\em (Eds.) Handbook of Magma Functions,}.
\newblock Edition 2.13, 4350 pages, 2006.

\bibitem{EP88}
D.~Easdown and C.E. Praeger.
\newblock On minimal faithful permutation representations of finite groups.
\newblock {\em Bull. Austral. Math. Soc.}, 38:207--220, 1988.

\bibitem{H90}
J.~Humphreys.
\newblock {\em Reflection Groups and Coxeter Groups}.
\newblock Cambridge University Press, Cambridge Studies in Advance Mathematics
  29, 1990.

\bibitem{J71}
D.L. Johnson.
\newblock Minimal permutation representations of finite groups.
\newblock {\em Amer. J. Math.}, 93(4):857--866, 1971.

\bibitem{K70}
G.I. Karpilovsky.
\newblock The least degree of a faithful representation of abelian groups.
\newblock {\em Vestnik Khar'kov Gos. Univ}, 53:107--115, 1970.

\bibitem{KL90}
P.~Kleidman and M.~Liebeck.
\newblock {\em The Subgroup Structure of the Finite Classical Groups}.
\newblock Cambridge University Press, London Mathematical Society Lecture Note
  Series 129, 1990.

\bibitem{TL07}
G.I. Lehrer and D.E. Taylor.
\newblock {\em Unitary Reflection Groups}.
\newblock Australian Mathematical Society Lecture Series 20, Cambridge
  University Press, 2009.

\bibitem{S07}
N.~Saunders.
\newblock Strict inequalities for minimal degrees of direct products.
\newblock {\em Bull. Aust. Math. Soc.}, 79(1):23--30, 2009.

\bibitem{S08}
N.~Saunders.
\newblock The minimal degree for a class of finite complex reflection groups.
\newblock {\em J. Algebra}, 323:561--573, 2010.

\bibitem{W75}
D.~Wright.
\newblock Degrees of minimal embeddings of some direct products.
\newblock {\em Amer. J. Math.}, 97:897--903, 1975.

\end{thebibliography}

\bibliographystyle{plain}

\vspace{6pt}

\end{document}